\documentclass[12pt,reqno]{amsart}
\usepackage[left=3cm,top=2cm,right=3cm,bottom=2cm]{geometry}
\usepackage{ltxcmds}
\usepackage{iftex}
\usepackage{pdftexcmds}
\usepackage{amssymb}
\usepackage{amsbsy}
\usepackage{epsfig}
\usepackage{tikz}
\usepackage{verbatim}
\usepackage[normalem]{ulem}
\usepackage[pdftex]{hyperref}
\hypersetup{colorlinks=true,linkcolor=blue,citecolor=blue,breaklinks = true}

\tikzstyle{vertex} = [fill,shape=circle,node distance=80pt]
\tikzstyle{edge} = [fill,opacity=.5,fill opacity=.5,line cap=round, line join=round, line width=40pt]
\tikzstyle{elabel} =  [fill,shape=circle,node distance=30pt]

\pgfdeclarelayer{background}
\pgfsetlayers{background,main}

\begin{document}
\title{Energies of Hypergraphs}	


\author[K. Cardoso]{Kau\^e Cardoso} \address{Instituto Federal do Rio Grande do Sul - Campus Feliz, CEP 95770-000, Feliz, RS, Brazil}
\email{\tt kaue.cardoso@feliz.ifrs.edu.br}
 	
\author[V.Trevisan]{Vilmar Trevisan} \address{Instituto de Matem\'atica e Estat\'{\i}stica, UFRGS,  CEP 91509--900, Porto Alegre, RS, Brazil}
\email{\tt trevisan@mat.ufrgs.br}

\pdfpagewidth 8.5 in \pdfpageheight 11 in

\newcommand{\h}{\mathcal{H}}
\newcommand{\g}{\mathcal{G}}
\newcommand{\A}{\mathbf{A}}
\newcommand{\Q}{\mathbf{Q}}
\newcommand{\B}{\mathbf{B}}
\newcommand{\C}{\mathbf{C}}
\newcommand{\D}{\mathbf{D}}
\newcommand{\M}{\mathbf{M}}
\newcommand{\N}{\mathbf{N}}
\newcommand{\E}{\mathtt{E}}
\newcommand{\BE}{\mathtt{BE}}
\newcommand{\QE}{\mathtt{QE}}
\newcommand{\lin}{\mathcal{L}}
\newcommand{\cli}{\mathcal{C}}
\newcommand{\s}{\mathcal{S}}
\newcommand{\x}{\mathbf{x}}
\newcommand{\y}{\mathbf{y}}
\newcommand{\z}{\mathbf{z}}
\newcommand{\Ah}{\mathbf{A}(\mathcal{H})}

\theoremstyle{plain}
\newtheorem{Teo}{Theorem}
\newtheorem{Lem}[Teo]{Lemma}
\newtheorem{Pro}[Teo]{Proposition}
\newtheorem{Cor}[Teo]{Corollary}

\theoremstyle{definition}
\newtheorem{Def}{Definition}[section]
\newtheorem{Afi}[Def]{Affirmation}
\newtheorem{Que}[Def]{Question}
\newtheorem{Exe}[Def]{Example}
\newtheorem{Obs}[Def]{Remark}

\maketitle

\begin{abstract}
In this paper, we study energies associated with hypergraphs. More precisely, we obtain results for the incidence and the singless Laplacian energies of uniform hypergraphs. In particular, we obtain bounds for the incidence energy as functions of well known parameters, such as maximum degree, Zagreb index and spectral radius. We also relate the incidence and signless Laplacian energies of a hypergraph with the adjacency energies of its subdivision graph and line multigraph, respectively. In addition, we compute the signless Laplacian energy for the class of the power hypergraphs.\newline

\noindent \textsc{Keywords.} Incidence Energy; Signless Laplacian Energy; Power Hypergraph.\newline

\noindent \textsc{AMS classification.} 05C65; 05C50; 15A18.
\end{abstract}

\section{Introduction}
The study of molecular orbital energy levels of $\pi$-electrons in conjugated hydrocarbons may be seen as one of the oldest applications of spectral graph theory (see \cite{energy-book}). Research on this topic can be traced back to the 1930s \cite{energy-huckel}. In those studies, graphs were used to represent hydrocarbon molecules and it was shown that an approximation of the total $\pi$-electron energy may be  computed from the eigenvalues of the graph. Based on this chemical concept, in 1977 Gutman \cite{energy-Gutman} defined graph energy, starting a new line of research within the spectral graph theory community. In 2007, Nikiforov \cite{energy-niki} extended the concept of graph energy to matrices. For a matrix $\M$, its \textit{energy} $\E(\M)$, is defined as the sum of its singular values. From this work, other energies associated with graphs emerged, such as incidence energy \cite{incidence1} in 2009 and signless Laplacian energy \cite{energy-lap1} in 2010.

Regarding hypergraphs, a natural way to define  energy is to associate a hypegraph with a matrix $\M$ and then, using Nikiforov's definition, say that its energy is $\E(\M)$. It is worth mentioning that we have found no record of this natural extension in the literature. Perhaps the main reason for this lack of results is the fact that in 2012, Cooper and Dutle \cite{Cooper} proposed the study of hypergraphs through tensors, and this new approach has been widely accepted by researchers of this area. However, to  obtain  eigenvalues of tensors has a high computational and theoretical cost, so the definition of energy does not seem so natural in that setting. In this regard, we see that  the study of hypergraphs via matrices still has its place. Indeed, the first attempts to study spectral theory of hypergraphs were done using matrices \cite{Feng} and it is worth pointing out that more recently, some authors have renewed the interest to study matrix representations of hypergraphs, as in \cite{Banerjee, Kaue-lap, Reff20192}.

Following this trend, we propose in this note the study of hypergraph energies from their matrix representations. More precisely, we define and study two energies associated with hypergraphs. First, suppose $\h$ is a hypergraph and $\B$ is its incidence matrix,  we define its \textit{incidence energy} $\BE(\h)$ as the energy of $\B$.

Here we prove some interesting properties for the incidence energy as for example, we show that if $\BE(\h)$ is a rational number, then it is an integer, and also if $k$ or $m$ are even, then $\BE(\h)$ is even. In addition, we obtain several lower and upper bounds, and a Nordhaus–Gaddum type result relating $\BE(\h)$ to important parameters. A surprising result proved here relates the incident energy of a hypergraph to the adjacency energy of a graph as follows. The subdivision graph $\s(\h)$ is obtained by adding a new vertex to each hyperedge $e$ and make it adjacent to all vertices of $e$.

\begin{Teo}\label{teo:grafosub}If $\h$ is a uniform hypergraph, then $\BE(\h) = \frac{1}{2}\E(\A_\s)$, where $\A_\s$ is the adjacency matrix of $\s(\h)$.
\end{Teo}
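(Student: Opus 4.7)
The plan is to exploit the fact that the subdivision graph $\s(\h)$ is bipartite, with the bipartition given by the original vertices of $\h$ on one side and the newly introduced subdivision vertices (one per hyperedge) on the other. Under this natural ordering, every edge of $\s(\h)$ goes across the bipartition, and in fact a vertex $v \in V(\h)$ is adjacent to the subdivision vertex $v_e$ precisely when $v \in e$. This is exactly the adjacency pattern encoded by the incidence matrix $\B$, so
\[
\A_\s \;=\; \begin{pmatrix} \mathbf{0} & \B \\ \B^{T} & \mathbf{0} \end{pmatrix}.
\]

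Next I would translate this block structure into a spectral statement. Take a singular value decomposition $\B = U\Sigma V^{T}$. For every singular triple $(\sigma, u, v)$ with $\B v = \sigma u$ and $\B^{T} u = \sigma v$, direct multiplication shows that the vectors $\binom{u}{v}$ and $\binom{u}{-v}$ are eigenvectors of $\A_\s$ with eigenvalues $+\sigma$ and $-\sigma$ respectively, and these two families together with the kernels of $\B$ and $\B^{T}$ span the whole space. Hence the multiset of nonzero eigenvalues of $\A_\s$ is exactly $\{\pm\sigma_{i}(\B)\}_{i}$, each nonzero singular value of $\B$ contributing one positive and one negative eigenvalue.

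Summing absolute values then yields
\[
\E(\A_\s) \;=\; \sum_{i}\bigl|\lambda_{i}(\A_\s)\bigr| \;=\; 2\sum_{i}\sigma_{i}(\B) \;=\; 2\,\E(\B) \;=\; 2\,\BE(\h),
\]
so that $\BE(\h) = \tfrac{1}{2}\E(\A_\s)$, as desired.

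There is no real obstacle here: once the block form of $\A_\s$ is in hand, the result follows from the standard correspondence between the spectrum of a bipartite adjacency matrix and the singular values of its biadjacency block. I note that the argument does not actually use $k$-uniformity; it works for any hypergraph whose incidence matrix is the $\B$ used in the definition of $\BE$.
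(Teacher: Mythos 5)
Your proposal is correct and follows essentially the same route as the paper: the paper's Proposition preceding this theorem establishes exactly the correspondence you describe, pairing each singular triple $(\sigma,\mathbf{u},\mathbf{v})$ of $\B$ with eigenvectors of $\A_\s$ for $\pm\sigma$ via the block form $\A_\s = \left(\begin{smallmatrix}\mathbf{0} & \B\\ \B^T & \mathbf{0}\end{smallmatrix}\right)$, and then sums absolute values. Your remark that uniformity is not actually needed is accurate but does not change the argument.
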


The \textit{signless Laplacian matrix} of a hypergraph is defined (see \cite{Kaue-lap}) as $\Q = \B\B^T$. So, we define its \textit{signless Laplacian energy} as $\QE(\h) = \E\left( \Q- d(\h)\mathbf{I}\right)$. Here we make a detailed study of the relationship of this parameter to the adjacency energy of the line multigraph associated with $\h$. We are also able to bound the variation in energy when we add a new edge to the hypergraph.

As a particular case, we also study this energy for the class of power hypergraphs (see definition in Section \ref{sec:power}). We prove that, if a sufficiently large number of new vertices is added to each edge of the hypergraph, then it is possible to determine its signless Laplacian energy even without knowing its spectrum.

The paper is organized as follows. In Section \ref{sec:pre},
we present some basic definitions about hypergraphs and matrices. In Section
\ref{sec:incidence}, we study the incidence energy, extending many classical results of this energy to the context of uniform hypergraphs. In Section \ref{sec:laplacian}, we study the signless Laplacian energy of a hypergraph, relating this spectral parameter with the adjacency energy of the line multigraph. In Section \ref{sec:power}, we study the signless Laplacian energy of a power hypergraph.

\section{Preliminaries}\label{sec:pre}
In this section, we shall present some basic definitions about hypergraphs and matrices, as well as terminology, notation and concepts that will be useful in our proofs.
\vspace{0cm}

A \textit{hypergraph} $\h=(V,E)$ is a pair composed by a set of vertices $V(\h)$ and a set of (hyper)edges $E(\h)\subseteq 2^V$, where $2^V$ is the power set of $V$. $\h$ is said to be a $k$-\textit{uniform} (or a $k$-graph) for $k \geq 2$, if all edges have cardinality $k$. Let $\mathcal{H}=(V,E)$ and $\mathcal{H}'=(V',E')$ be hypergraphs, if $V'\subseteq V$ and $E'\subseteq E$, then $\mathcal{H}'$ is a \textit{subgraph} of $\h$. The \textit{complete $k$-graph} $\mathcal{K}_n$ on $n$ vertices, is a hypergraph, such that any subset of $V(\mathcal{K}_n)$ with $k$ vertices is an edge in $E(\mathcal{K}_n)$. The \textit{complement} of a $k$-graph $\h=(V,E)$ is the $k$-uniform hypergraph  $\overline{\h} = (\overline{V}, \overline{E})$, where $V = \overline{V}$ and $\overline{E} = E(\mathcal{K}_n) \smallsetminus E$.
\vspace{0.5cm}

The \textit{edge neighborhood} of a vertex $v\in V$, denoted by $E_{[v]}$, is the set of all edges that contains $v$. More precisely, $\;E_{[v]}=\{e:\;v\in e\in E\}$. The \textit{degree} of a vertex $v\in V$, denoted by $d(v)$, is the number of edges that contain $v$. More precisely, $\;d(v) = |E_{[v]}|$. A hypergraph is $r$-\textit{regular} if $d(v) = r$ for all $v \in V$. We define the \textit{maximum}, \textit{minimum} and \textit{average} degrees, respectively, as
\[\Delta(\h) = \max_{v \in V}\{d(v)\}, \quad \delta(\h) = \min_{v \in V}\{d(v)\}, \quad d(\h) = \frac{1}{n}\sum_{v \in V}d(v).\]

For a hypergraph $\h$, its \textit{line multigraph} $\lin(\h)$ is obtained by transforming the hyperedges of $\h$ in its vertices, and the number of edges between two vertices of this multigraph is equal the number of vertices in common in the two respective hyperedges. The \textit{clique multigraph} $\cli(\h)$, is obtained by transforming the vertices of $\h$ in its vertices. The number of edges between two vertices of this multigraph is equal the number of hyperedges containing them in $\h$. For more details see \cite{Kaue-lap}.

\begin{Exe}
	The clique and line multigraphs from $\h=(\{1,\ldots,5\},
	\;\{123,145,345\})$, are illustrate in Figure \ref{fig:multigrafos}.
	\begin{figure}[h!]	
		\centering

		\begin{tikzpicture}	
		\node[draw,circle,fill=black,label=below:,label=above:\(1\)] (v1) at (0,0) {};
		\node[draw,circle,fill=black,label=below:,label=above:\(2\)] (v2) at (1.5,2) {};
		\node[draw,circle,fill=black,label=below:,label=above:\(3\)] (v3) at (3,0) {};
		\node[draw,circle,fill=black,label=below:,label=above:\(4\)] (v4) at (1.5,0) {};
		\node[draw,circle,fill=black,label=below:,label=above:\(5\)] (v5) at (1.5,1) {};
		
		\path
		(v1) edge node[below]{} (v2)
		(v2) edge node[below]{} (v3)
		(v3) edge [bend left] node[below]{} (v1)
		(v1) edge node[below]{} (v4)
		(v1) edge node[below]{} (v5)
		(v4) edge [bend left] node[below]{} (v5)
		(v4) edge [bend right] node[below]{} (v5)
		(v4) edge node[below]{} (v3)
		(v3) edge node[below]{} (v5);
		\end{tikzpicture}
		\begin{tikzpicture}	
		\node[draw,circle,fill=white, label=above:\(123\)] (a) at (1.5, 2) {};
		\node[draw,circle,fill=white, label=above:\(145\)] (b) at (0, 0) {};
		\node[draw,circle,fill=white, label=above:\(345\)] (c) at (3, 0) {};

		\path
		(a) edge node[below]{} (b)
		(a) edge node[below]{} (c)
		(b) edge [bend left] node[below]{} (c)
		(b) edge [bend right] node[below]{} (c);
		\end{tikzpicture}
		\caption{~Clique $\cli(\h)$ and line $\lin(\h)$ multigraphs.}\label{fig:multigrafos}	
	\end{figure}
\end{Exe}

Let $\M$ be matrix. The \emph{singular values} of $\M$ are the square roots of the eigenvalues of the matrix $\M\M^T$. The \textit{rank} of $\M$ is defined as the number of non zero singular values (counting multiplicities). If $\M$ is a square matrix with $n$ rows, we denote its \textit{characteristic polynomial} by $P_\M(\lambda)=\det(\lambda\mathbf{I}_n-\M)$. Its eigenvalues will be denoted by $\lambda_1(\M)\geq\cdots\geq\lambda_n(\M)$. The \textit{spectral radius} $\rho(\M)$, is the largest modulus of an eigenvalue. Let $\h = (V, E)$ be a hypergraph. The \textit{incidence matrix} $\B(\h)$ is defined as the matrix of order $|V|\times|E|$, where $\;b(v,e) = 1$ if  $v \in e$ and
$\;b(v,e) = 0$ otherwise.

\begin{Lem}[Theorem 2, \cite{Kaue-lap}]\label{teo:multigrafo}	
	Let $\h$ be a $k$-graph, $\B$  its incidence matrix, $\D$ its degree matrix,
	$\A_\lin$ and $\A_\cli$ the adjacency matrices of its line and clique
	multigraphs, respectively. So, we have $\B^T\B = k\mathbf{I}+\A_\lin$,  and  $\;\B\B^T =\D+\A_\cli.$
\end{Lem}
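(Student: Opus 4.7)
The plan is to verify both matrix identities entrywise, working directly from the definition of the incidence matrix $\B$ and the definitions of $\A_\lin$ and $\A_\cli$ given just before the statement.

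For the first identity, I would observe that $\B^T\B$ is a square matrix of order $|E|$ whose rows and columns are indexed by the hyperedges of $\h$. For edges $e, f \in E$, the $(e,f)$-entry equals
\[
(\B^T\B)_{e,f} \;=\; \sum_{v \in V} b(v,e)\,b(v,f) \;=\; |e \cap f|,
\]
since $b(v,e)b(v,f) = 1$ exactly when $v \in e \cap f$. When $e = f$, uniformity gives $|e \cap e| = |e| = k$, producing the diagonal contribution $k\mathbf{I}$. When $e \neq f$, the quantity $|e \cap f|$ is, by the definition of the line multigraph, precisely the number of parallel edges between $e$ and $f$ in $\lin(\h)$, that is $(\A_\lin)_{e,f}$. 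Combining the diagonal and off-diagonal cases yields $\B^T\B = k\mathbf{I} + \A_\lin$.

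For the second identity, $\B\B^T$ is a square matrix of order $|V|$ indexed by the vertices. For $u, v \in V$ we have
\[
(\B\B^T)_{u,v} \;=\; \sum_{e \in E} b(u,e)\,b(v,e) \;=\; \bigl|\{e \in E : u \in e,\; v \in e\}\bigr|.
\]
When $u = v$ this count is $|E_{[v]}| = d(v)$, so the diagonal is exactly the degree matrix $\D$. When $u \neq v$ the count is the number of hyperedges of $\h$ containing both $u$ and $v$, which by the definition of the clique multigraph equals $(\A_\cli)_{u,v}$. Hence $\B\B^T = \D + \A_\cli$.

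Since both identities reduce to direct bookkeeping of incidences, there is no real obstacle in the argument; the only point requiring care is invoking $k$-uniformity in the diagonal of $\B^T\B$ (otherwise that diagonal would be the ``edge-size'' matrix rather than the scalar matrix $k\mathbf{I}$), and being explicit that the line and clique multigraphs are genuine multigraphs so that $|e \cap f|$ and the vertex co-occurrence count can exceed $1$ and still coincide with the adjacency entries.
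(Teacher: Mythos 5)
Your proof is correct. The paper itself does not reprove this statement---it is imported as Theorem 2 of \cite{Kaue-lap}---and your entrywise computation of $(\B^T\B)_{e,f}=|e\cap f|$ and $(\B\B^T)_{u,v}=|\{e\in E: u,v\in e\}|$, matched against the definitions of $\A_\lin$, $\A_\cli$, and $\D$ (with $k$-uniformity supplying the diagonal $k\mathbf{I}$), is exactly the standard argument one would give for it, so there is nothing to add.
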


For a non-empty subset of vertices $\alpha = \{v_1,\ldots,v_t\} \subset V$ and a vector $\x=(x_i)$ of dimension $n=|V|$, we denote $x(\alpha)=x_{v_1}+\cdots+x_{v_t}$. Recall that the signless Laplacian matrix is defined as $\Q = \B\B^T$, so we can write
\[(\Q\x)_u = (\D\x)_u+(\A_\cli\x)_u = d(u)x_u+\sum_{e \in E_{[u]}}x\left(e-\{u\}\right)  = \sum_{e \in E_{[u]}}x(e), \quad \forall u \in V(\h).\]

\section{Incidence energy}\label{sec:incidence}
In this section, we will study the incidence energy of a hypergraph, relating it to the adjacency energy of its subdivision graph. In addition, we obtain upper and lower bounds for this parameter. Many  results in this section are generalizations of incidence energy properties in the context of graphs, which can be found in  \cite{incidence2, incidence3, incidence1}.
\begin{Def}
	Let $\h$ be a $k$-graph with at least one edge and $\B$ its incidence matrix. The incidence energy of $\h$ is defined as the energy os its incidence matrix. More precisely $\BE(\h) = \E(\B)$. If $\h$ has no edge, then we define $\BE(\h) = 0$.
\end{Def}
Let $\h$ be a $k$-graph with $n$ vertices and $m$ edges, let $\Q$ be its signless Laplacian matrix and $\lin$ its line multigraph. We observe that, $\BE(\h) = \sum_{i=1}^n\sqrt{\lambda_i(\Q)}$, this occurs because $\B\B^T=\Q$. Similarly, $\BE(\h) = \sum_{i=1}^{m}\sqrt{k+\lambda_i(\A_\lin)}$, this occurs because $\B\B^T$ and $\B^T\B$ have the same non zero eigenvalues and, moreover, $\B^T\B=k\mathbf{I}+\A_\lin$.

\begin{Exe}\label{exe:comp}
We will determine the incidence energy of the complete $k$-graph. First, we notice that its eigenvalues are $\rho(\Q)=\frac{k(n-1)!}{(k-1)!(n-k)!}$ and $\lambda = \frac{(n-2)!}{(k-1)!(n-k-1)!}$ with multiplicity $n-1$. Therefore, the incidence energy of the complete $k$-graph is
	$$\BE(\mathcal{K}_n) = \sqrt{\frac{k(n-1)!}{(k-1)!(n-k)!}} + (n-1)\sqrt{\frac{(n-2)!}{(k-1)!(n-k-1)!}}.$$
\end{Exe}

\begin{Def}
	Let $\h$ be a $k$-graph. Its subdivision graph $\s(\h)$ is obtained as follows. For each hyperedge $e \in E(\h)$, add a new vertex $v_e$ and make it adjacent to all vertices of $e$.
\end{Def}

\begin{Exe}
	Let $\h$ be the $3$-graph with the following sets of vertices and edges
	$V = \{1,2,3,4\}$,  $E=\{123,234\}$. We illustrate it and its subdivision graph in Figure \ref{fig:grafosub}.
	
	\begin{figure}[h!]
		\centering
		\begin{tikzpicture}
		\node[draw,circle,fill=black,label=below:,label=above:\(1\)] (v1) at (0,0) {};
		\node[draw,circle,fill=black,label=below:,label=above:\(2\)] (v2) at (2,1) {};
		\node[draw,circle,fill=black,label=below:,label=above:\(3\)] (v3) at (2,-1) {};
		\node[draw,circle,fill=black,label=below:,label=above:\(4\)] (v4) at (4,0) {};

		\begin{pgfonlayer}{background}
		\draw[edge,color=gray] (v1) -- (v2) -- (v3) -- (v1);
		\draw[edge,color=gray] (v4) -- (v2) -- (v3) -- (v4);
		
		\end{pgfonlayer}
		\end{tikzpicture}
		\begin{tikzpicture}
		[scale=1,auto=left,every node/.style={circle,scale=0.9}]
		\node[draw,circle,fill=black,label=below:,label=above:\(1\)] (v1) at (0,0) {};
		\node[draw,circle,fill=black,label=below:,label=above:\(2\)] (v2) at (2,1) {};
		\node[draw,circle,fill=black,label=below:,label=above:\(3\)] (v3) at (2,-1) {};
		\node[draw,circle,fill=black,label=below:,label=above:\(4\)] (v4) at (4,0) {};
		\node[draw,circle,fill=white,label=below:] (va) at (1,0) {};
		\node[draw,circle,fill=white,label=below:] (vb) at (3,0) {};

		\path
		(v1) edge node[below]{} (va)
		(v2) edge node[below]{} (va)
		(v3) edge node[below]{} (va)
		(v4) edge node[below]{} (vb)
		(v2) edge node[below]{} (vb)
		(v3) edge node[below]{} (vb);
		\end{tikzpicture}
		\caption{~The hypergraph $\h$ and its subdivision graph.}\label{fig:grafosub}
	\end{figure}
\end{Exe}

\begin{Obs}
	Informally we may see that the subdivision graph of $\h$ transforms each hyperedge into a star with $k+1$ vertices. If $\h$ has $n$ vertices and $m$ edges, then $\s(\h)$ is a bipartite graph with $n+m$ vertices and $km$ edges. Also, if $\B$ is the incidence matrix of $\h$, then the adjacency matrix of $\s(\h)$ is given by
	$\A_\s = \left( \begin{smallmatrix}\mathbf{0} & \B\\ \;\B^T & \mathbf{0}\end{smallmatrix}\right).$
\end{Obs}

\begin{Pro}\label{lem:grafosub}
Let $\h$ be a $k$-graph on $n$ vertices and $m$ edges, and $\s$ be its subdivision graph. The set  $\{\lambda_1,\ldots,\lambda_t,0^{n-t}\}$ is the spectrum of $\Q$ if and only if the spectrum of $\A_\s$ is $\{\pm\sqrt{\lambda_1},\ldots,\pm\sqrt{\lambda_t},0^{m+n-2t}\}$.
\end{Pro}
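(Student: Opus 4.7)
The plan is to leverage the block structure
$$\A_\s = \begin{pmatrix} \mathbf{0} & \B \\ \B^T & \mathbf{0} \end{pmatrix}$$
observed in the preceding remark. Squaring gives
$$\A_\s^{\,2} = \begin{pmatrix} \B\B^T & \mathbf{0} \\ \mathbf{0} & \B^T\B \end{pmatrix} = \begin{pmatrix} \Q & \mathbf{0} \\ \mathbf{0} & \B^T\B \end{pmatrix},$$
so the spectrum of $\A_\s^{\,2}$ is the multiset union of the spectra of $\Q$ and $\B^T\B$. Since $\B\B^T$ and $\B^T\B$ share the same nonzero eigenvalues with multiplicity (a standard singular value fact), if $\Q$ has spectrum $\{\lambda_1,\ldots,\lambda_t,0^{n-t}\}$ with each $\lambda_i>0$, then $\B^T\B$ has spectrum $\{\lambda_1,\ldots,\lambda_t,0^{m-t}\}$.

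Next, for each index $i\le t$ I would pick a unit right singular vector $\y_i\in\mathbb{R}^m$ of $\B$ (so $\B^T\B\,\y_i=\lambda_i\y_i$) and set $\x_i=\lambda_i^{-1/2}\B\y_i\in\mathbb{R}^n$. Then $\B\y_i=\sqrt{\lambda_i}\,\x_i$ and $\B^T\x_i=\sqrt{\lambda_i}\,\y_i$, so a direct computation yields
$$\A_\s \begin{pmatrix} \x_i \\ \pm\y_i \end{pmatrix} = \pm\sqrt{\lambda_i}\begin{pmatrix} \x_i \\ \pm\y_i \end{pmatrix}.$$
This exhibits the $2t$ eigenvalues $\pm\sqrt{\lambda_1},\ldots,\pm\sqrt{\lambda_t}$ of $\A_\s$. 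Because $\A_\s$ has order $n+m$ and $\A_\s^{\,2}$ already has exactly $n+m-2t$ zero eigenvalues, the remaining $n+m-2t$ eigenvalues of $\A_\s$ must be zero, proving the forward implication.

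For the converse I reverse the argument. Assuming $\A_\s$ has the stated spectrum, $\A_\s^{\,2}$ has each $\lambda_i$ with multiplicity two and $0$ with multiplicity $n+m-2t$. The block diagonal form together with the equality of nonzero spectra of $\B\B^T$ and $\B^T\B$ forces each $\lambda_i$ to appear exactly once in each block, so $\Q$, which has order $n$, carries the spectrum $\{\lambda_1,\ldots,\lambda_t,0^{n-t}\}$. The only delicate point is the multiplicity bookkeeping for the zero eigenvalue; this falls into place because $\Q$ is $n\times n$ while $\B^T\B$ is $m\times m$, so fixing $t$ common nonzero eigenvalues automatically determines $n-t$ and $m-t$ zeros in the two respective blocks.
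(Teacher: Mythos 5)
Your proof is correct. It shares the paper's central device --- building eigenvectors of $\A_\s$ of the form $(\x,\pm\y)$ from a singular vector pair of $\B$ --- but it replaces the paper's converse direction with a different mechanism. The paper argues entirely through eigenvectors: it shows that every positive eigenvalue of $\A_\s$ yields a singular value of $\B$ (by splitting an eigenvector into its two blocks), and separately that $\sigma\mapsto-\sigma$ preserves eigenvalues via $(\y,\x)\mapsto(\y,-\x)$. You instead square the block matrix to get $\A_\s^{\,2}=\left(\begin{smallmatrix}\B\B^T&\mathbf{0}\\ \mathbf{0}&\B^T\B\end{smallmatrix}\right)$ and read off the full multiset of eigenvalues of $\A_\s^{\,2}$ from the two diagonal blocks, using the standard fact that $\B\B^T$ and $\B^T\B$ agree on nonzero eigenvalues; the explicit eigenvectors then only serve to split each $\lambda_i$ into the pair $\pm\sqrt{\lambda_i}$ with equal multiplicities. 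This buys you something the paper's write-up glosses over: the squaring argument pins down the multiplicities exactly (the count $n+m-2t$ of zero eigenvalues falls out immediately, and the ``at least $\mu$ of each sign'' from the eigenvectors combined with ``exactly $2\mu$ in total'' forces equality), whereas the paper's proof establishes the correspondence of nonzero eigenvalues as sets and leaves the multiplicity bookkeeping implicit. Both directions of the equivalence come out of your single computation, since the argument is reversible through the spectrum of $\A_\s^{\,2}$.
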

\begin{proof}
First, we observe that $\lambda$ is an eigenvalue of $\Q$ if and only if $\sigma = \sqrt{\lambda}$ is a singular value of $\B$. Let $\x$ and $\y$ be the singular vectors of $\sigma$, such that $\;\B\x = \sigma\y\;$ and $\;\B^T\y = \sigma\x$. We define a vector $\z$ of dimension $m+n$ by $z_i = y_i$ if $1 \leq i \leq n$ and $z_j = x_j$ if $n+1 \leq j \leq m+n$. We have
	$$\A_\s\z = \left[ \begin{matrix}\B\x\\\B^T\y\end{matrix}\right] = \left[ \begin{matrix}\sigma\y\\\sigma\x\end{matrix}\right] = \sigma\z, \quad \Rightarrow \quad \sigma \textrm{ is an eigenvalue of } \s(\h).$$
	
	Let $\sigma$ be a positive eigenvalue of $\s(\h)$ and $\z$ be an eigenvector of $\sigma$. We define vectors $\y$ and $\x$ of dimensions $n$ and $m$ respectively, by $y_i = z_i$ if $1\leq i \leq m$ and $x_i = z_{m+i}$ if $1\leq i \leq n$. We have
	$$\left[ \begin{matrix}\B\x\\\B^T\y\end{matrix}\right] = \A_\s\z = \sigma\z = \left[ \begin{matrix}\sigma\y\\\sigma\x\end{matrix}\right], \quad \Rightarrow \quad \sigma \textrm{ is an singular value of } \B.$$
	
	Finally, if $\sigma$ is a positive eigenvalue of $\s(\h)$, then $-\sigma$ is also an eigenvalue of $\s(\h)$, and with the same multiplicity. In fact, if $\z = (\y,\x)$ is an eigenvector of $\sigma$, we define $\tilde{\z} = (\y,-\x)$. We have
	$$\A_\s\tilde{\z} = \left[ \begin{matrix}\B(-\x)\\\B^T\y\end{matrix}\right] = \left[ \begin{matrix}-\sigma\y\\\sigma\x\end{matrix}\right] = -\sigma\left[ \begin{matrix}\y\\-\x\end{matrix}\right] = -\sigma\tilde{\z}.$$
	Under these conditions, we conclude that the set of all nonzero eigenvalues of $\s(\h)$ is  $\{\pm\sqrt{\lambda_1},\ldots,\pm\sqrt{\lambda_t}\}$.
\end{proof}

\vspace{0.1cm}\noindent \textbf{Theorem \ref{teo:grafosub}.} \textit{If $\h$ is a $k$-graph, then $\BE(\h) = \frac{1}{2}\E(\A_\s)$.}
\begin{proof}
	Let $\lambda_1,\ldots,\lambda_t$ be all positive eigenvalues of $\Q(\h)$. By Proposition \ref{lem:grafosub}, we have
	$$\E(\A_\s) = |\sqrt{\lambda_1}|+\cdots+|\sqrt{\lambda_t}|+|-\sqrt{\lambda_1}|+\cdots+|-\sqrt{\lambda_t}| = 2\BE(\h).$$
	Therefore, the result follows.
\end{proof}

\begin{Lem}[Lemma 2, \cite{energy-ref2}]\label{lem:rank}
	If $\g$ is a graph, then $\E(\A_\g) \geq \mathrm{rank}(\A_\g)$.
\end{Lem}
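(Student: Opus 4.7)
The plan is to combine the AM--GM inequality with the integrality of the coefficients of the characteristic polynomial of $\A_\g$. Let $r=\mathrm{rank}(\A_\g)$ and let $\lambda_1,\ldots,\lambda_r$ be the nonzero eigenvalues of $\A_\g$. Since $\A_\g$ is a symmetric real matrix, its singular values are $|\lambda_1|,\ldots,|\lambda_r|$ together with $n-r$ zeros, so
$$\E(\A_\g)=\sum_{i=1}^{r}|\lambda_i|.$$

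First I would apply the AM--GM inequality to the $r$ nonzero terms to get
$$\E(\A_\g)=\sum_{i=1}^{r}|\lambda_i|\;\geq\; r\left(\prod_{i=1}^{r}|\lambda_i|\right)^{1/r}.$$
Hence it suffices to show $\prod_{i=1}^{r}|\lambda_i|\geq 1$. This is the only substantive step, and it is where the hypothesis that $\A_\g$ is the adjacency matrix of a (simple) graph is needed.

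For this, I would use the fact that $\A_\g$ has integer entries, so its characteristic polynomial $P_{\A_\g}(\lambda)=\det(\lambda\mathbf{I}_n-\A_\g)$ has integer coefficients. Writing
$$P_{\A_\g}(\lambda)=\lambda^{n-r}\prod_{i=1}^{r}(\lambda-\lambda_i),$$
the coefficient of $\lambda^{n-r}$ equals $(-1)^{r}\prod_{i=1}^{r}\lambda_i$, which is therefore an integer. By the definition of $r$, this product is nonzero, so $\left|\prod_{i=1}^{r}\lambda_i\right|\geq 1$, and combining with the AM--GM bound yields $\E(\A_\g)\geq r\cdot 1^{1/r}=r$.

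The main obstacle, if any, is recognizing that the product of the nonzero eigenvalues of $\A_\g$ is always a nonzero integer; once that is in hand, the rest is a one-line AM--GM application.
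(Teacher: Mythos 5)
Your proof is correct. Note that the paper does not prove this lemma at all --- it is imported verbatim as Lemma 2 of \cite{energy-ref2} --- so there is no in-paper argument to compare against; your argument (AM--GM on the $r$ nonzero singular values, combined with the observation that $\prod_{i=1}^{r}|\lambda_i|$ is the absolute value of the last nonzero coefficient of the integer characteristic polynomial and hence at least $1$) is the standard proof of this fact and is exactly what the cited source does. The only degenerate case, $r=0$, is trivial since the energy is nonnegative.
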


\begin{Cor}
	If $\h$ is a $k$-graph with incidence matrix $\B$, then $\BE(\h) \geq \mathrm{rank}(\B)$.
\end{Cor}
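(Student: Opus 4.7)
The plan is to chain together the three tools just established: Theorem \ref{teo:grafosub}, Proposition \ref{lem:grafosub}, and the graph-theoretic Lemma \ref{lem:rank}. Specifically, I would start from Theorem \ref{teo:grafosub}, which gives $\BE(\h)=\tfrac{1}{2}\E(\A_\s)$, then apply Lemma \ref{lem:rank} to the graph $\s(\h)$ to obtain $\E(\A_\s)\geq \mathrm{rank}(\A_\s)$. What remains is to relate $\mathrm{rank}(\A_\s)$ to $\mathrm{rank}(\B)$.

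For this last step, I would use Proposition \ref{lem:grafosub}. If $\mathrm{rank}(\B)=t$, then $\B$ has exactly $t$ nonzero singular values, so $\Q=\B\B^T$ has exactly $t$ positive eigenvalues $\lambda_1,\ldots,\lambda_t$ (and $n-t$ zero eigenvalues). Proposition \ref{lem:grafosub} then says that the spectrum of $\A_\s$ is $\{\pm\sqrt{\lambda_1},\ldots,\pm\sqrt{\lambda_t},0^{m+n-2t}\}$, which contains exactly $2t$ nonzero eigenvalues. Since $\A_\s$ is symmetric, its rank equals the number of nonzero eigenvalues, so $\mathrm{rank}(\A_\s)=2t=2\,\mathrm{rank}(\B)$.

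Putting the pieces together:
\begin{equation*}
\BE(\h)=\tfrac{1}{2}\E(\A_\s)\;\geq\;\tfrac{1}{2}\mathrm{rank}(\A_\s)\;=\;\mathrm{rank}(\B),
\end{equation*}
which is exactly the claim. There is no real obstacle here; the only thing to be slightly careful about is the identification $\mathrm{rank}(\B)=\mathrm{rank}(\B\B^T)$, which is standard and ensures that the count $t$ of positive eigenvalues of $\Q$ appearing in Proposition \ref{lem:grafosub} coincides with $\mathrm{rank}(\B)$. The corollary therefore reduces to a one-line computation once the three earlier results are invoked in sequence.
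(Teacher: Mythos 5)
Your proof is correct and follows exactly the same route as the paper: $\BE(\h)=\tfrac{1}{2}\E(\A_\s)\geq\tfrac{1}{2}\mathrm{rank}(\A_\s)=\mathrm{rank}(\B)$, using Theorem \ref{teo:grafosub}, Lemma \ref{lem:rank}, and Proposition \ref{lem:grafosub} in sequence. You merely spell out the identification $\mathrm{rank}(\A_\s)=2\,\mathrm{rank}(\B)$ in more detail than the paper does.
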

\begin{proof}
	We observe that $\BE(\h) = \frac{1}{2}\E(\A_\s) \geq \frac{1}{2}\mathrm{rank}(\A_\s) = \mathrm{rank}(\B).$ The first equality is given by Theorem \ref{teo:grafosub}, while the inequality is given by Lemma \ref{lem:rank} and the last equality is from Proposition \ref{lem:grafosub}.
\end{proof}

\begin{Lem}[Lemmas 1 and 2, \cite{energy-odd2}]\label{lem:soma-prod}
	Let $\g_1$ and $\g_2$ be graphs. If $\mu_1$  and $\mu_2$ are eigenvalues of $\g_1$ and $\g_2$ respectively, then there are graphs $\g_+$ and $\g_\times$ such that, $\mu_1+\mu_2$ and $\mu_1\cdot\mu_2$ are eigenvalues of $\g_+$ and $\g_\times$, respectively.
\end{Lem}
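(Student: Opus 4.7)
The plan is to exhibit $\g_+$ and $\g_\times$ explicitly as two classical graph products and then verify the spectral claim by tensoring eigenvectors via the Kronecker product. Let $\A_1$ and $\A_2$ denote the adjacency matrices of $\g_1$ and $\g_2$, of orders $n_1$ and $n_2$, and let $\x$ and $\y$ be eigenvectors corresponding to $\mu_1$ and $\mu_2$, respectively.

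For the sum, I would take $\g_+$ to be the Cartesian product $\g_1 \square \g_2$, whose vertex set is $V(\g_1)\times V(\g_2)$ with $(u_1,u_2)$ adjacent to $(v_1,v_2)$ iff either $u_1=v_1$ and $u_2v_2\in E(\g_2)$, or $u_2=v_2$ and $u_1v_1\in E(\g_1)$. Its adjacency matrix factors as $\A_+ = \A_1 \otimes \mathbf{I}_{n_2} + \mathbf{I}_{n_1} \otimes \A_2$, and the mixed-product rule yields
\[ \A_+(\x \otimes \y) = (\A_1\x)\otimes\y + \x\otimes(\A_2\y) = (\mu_1+\mu_2)(\x \otimes \y), \]
so $\mu_1+\mu_2$ appears in the spectrum of $\g_+$. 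For the product, I would take $\g_\times$ to be the tensor (direct) product $\g_1 \times \g_2$, in which $(u_1,u_2)$ is adjacent to $(v_1,v_2)$ iff $u_1v_1\in E(\g_1)$ and $u_2v_2\in E(\g_2)$; its adjacency matrix is $\A_\times = \A_1 \otimes \A_2$, and the same identity gives $\A_\times(\x\otimes\y) = \mu_1\mu_2(\x\otimes\y)$, realizing $\mu_1\mu_2$ as an eigenvalue of $\g_\times$.

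There is no serious obstacle: both constructions are standard, and the verification reduces to the Kronecker identity $(M_1\otimes M_2)(\x\otimes\y) = (M_1\x)\otimes(M_2\y)$, which is routine linear algebra. The only things worth checking are that each product yields a genuine simple graph when $\g_1$ and $\g_2$ are simple (no loops or repeated edges, immediate from the definitions) and the claimed form of the adjacency matrix, so the real content of the lemma is selecting the correct product operation in each case.
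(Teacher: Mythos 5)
Your proof is correct, and it is the standard argument: the Cartesian product $\g_1\,\square\,\g_2$ with adjacency matrix $\A_1\otimes\mathbf{I}+\mathbf{I}\otimes\A_2$ realizes $\mu_1+\mu_2$, and the tensor product with adjacency matrix $\A_1\otimes\A_2$ realizes $\mu_1\mu_2$. Note that the paper does not prove this lemma at all --- it is quoted verbatim as Lemmas 1 and 2 of the cited reference --- and your construction is precisely the one used there, so there is nothing to contrast.
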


\begin{Lem}[Lemma 3, \cite{energy-odd2}]\label{lem:inter}
	If an eigenvalue of a graph is rational, then it is an integer.
\end{Lem}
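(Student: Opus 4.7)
The lemma is the classical statement that a rational algebraic integer is an integer, specialized to eigenvalues of a graph. My approach has two ingredients: (i) the adjacency matrix of a graph has integer entries, so its characteristic polynomial is monic with integer coefficients; (ii) a monic polynomial in $\mathbb{Z}[x]$ has no non-integer rational roots. I will carry these out in order and then combine them in one line.

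First, I would note that for any graph $\g$ the entries of $\A_\g$ lie in $\{0,1\}\subset\mathbb{Z}$. Consequently $P_{\A_\g}(\lambda)=\det(\lambda\mathbf{I}-\A_\g)$, being a determinant whose entries are either $\lambda$ minus an integer (on the diagonal) or integers (off the diagonal), is a monic polynomial of degree $n=|V(\g)|$ with coefficients in $\mathbb{Z}$. By definition every eigenvalue of $\g$ is a root of $P_{\A_\g}$, so every such eigenvalue is an algebraic integer.

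Next, I would either cite or briefly reprove the rational root theorem for monic integer polynomials: if $\mu=p/q\in\mathbb{Q}$ with $\gcd(p,q)=1$ satisfies $\mu^n+a_{n-1}\mu^{n-1}+\cdots+a_0=0$ with $a_i\in\mathbb{Z}$, then multiplying through by $q^n$ gives
\[
p^n=-q\bigl(a_{n-1}p^{n-1}+a_{n-2}p^{n-2}q+\cdots+a_0 q^{n-1}\bigr),
\]
so $q\mid p^n$; since $\gcd(p,q)=1$, this forces $q=\pm 1$ and hence $\mu\in\mathbb{Z}$. Applying this to the monic polynomial $P_{\A_\g}$ concludes the proof: a rational eigenvalue of $\g$ must be an integer.

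There is no genuine obstacle; the only design choice is how self-contained to be, since the result is quoted from \cite{energy-odd2}. I would include the one-paragraph rational root argument above for readability, as it costs essentially nothing and keeps the note accessible to readers unfamiliar with algebraic integer terminology.
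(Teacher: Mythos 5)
Your argument is correct and complete: the adjacency matrix has integer entries, so its characteristic polynomial is monic in $\mathbb{Z}[x]$, and the rational root theorem (which you correctly reprove) forces any rational root to be an integer. The paper itself does not prove this lemma --- it is quoted directly from the cited reference --- but your proof is the standard one for this fact, so there is nothing to compare beyond noting that your self-contained version is a reasonable inclusion.
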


\begin{Lem}[See \cite{energy-odd1}]\label{teo:never-odd}
	If the energy of a graph is rational, then it is an even integer.
\end{Lem}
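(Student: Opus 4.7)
The plan is to combine the two immediately preceding lemmas about graph eigenvalues with the fact that the adjacency matrix of a simple graph has zero trace. The key reduction is to show that $\E(\g)/2$ is, up to sign, an eigenvalue of some auxiliary graph, at which point Lemma \ref{lem:inter} will finish the argument.

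First, I would observe that since a simple graph has no loops, $\mathrm{tr}(\A_\g) = 0$, so $\sum_i \lambda_i(\A_\g) = 0$. Partitioning the spectrum by sign and letting $S = \sum_{\lambda_i > 0}\lambda_i$, this identity gives $-\sum_{\lambda_j < 0}\lambda_j = S$, and hence
\[
\E(\g) \;=\; \sum_i |\lambda_i(\A_\g)| \;=\; 2\,S.
\]
In particular, if $\E(\g)$ is rational, then so is $S$.

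Next, I would realize $S$ as an eigenvalue of a single graph. Each positive eigenvalue $\lambda_i$ of $\g$ is, of course, an eigenvalue of $\g$ itself. Proceeding by induction on the number $t$ of positive eigenvalues (counted with multiplicity), the base case $t=1$ is trivial, and in the inductive step one applies the ``sum'' half of Lemma \ref{lem:soma-prod} to the graph produced at stage $t-1$ (which has $\lambda_1+\cdots+\lambda_{t-1}$ as an eigenvalue) together with $\g$ (which has $\lambda_t$ as an eigenvalue). This yields a graph $\g_+$ for which $S$ is an eigenvalue.

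Finally, since $S = \E(\g)/2$ is rational by assumption and is an eigenvalue of $\g_+$, Lemma \ref{lem:inter} forces $S \in \mathbb{Z}$, so $\E(\g) = 2S$ is an even integer. The only delicate point in this plan is the bookkeeping in the inductive step: one must be careful that Lemma \ref{lem:soma-prod} can be iterated on the newly constructed graphs (not just on the original $\g$), so the lemma must be understood as a statement about arbitrary pairs of graphs and arbitrary choices of eigenvalues from their spectra. Once that is granted, the argument goes through cleanly, and no additional spectral machinery is needed.
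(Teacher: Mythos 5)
Your proof is correct. The paper itself does not prove this lemma --- it is quoted from \cite{energy-odd1} without proof --- but your argument (using $\mathrm{Tr}(\A_\g)=0$ to write $\E(\g)=2S$ with $S$ the sum of the positive eigenvalues, realizing $S$ as an eigenvalue of an auxiliary graph by iterating the ``sum'' half of Lemma \ref{lem:soma-prod}, and then invoking Lemma \ref{lem:inter}) is precisely the standard proof from that reference, assembled from the very two lemmas the paper states and then reuses in the same way for the parity analysis of $\BE(\h)$; the only detail worth adding is the degenerate case of an edgeless graph, where $\E(\g)=0$ is trivially even.
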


\begin{Teo}
	Let $\h$ be a $k$-graph with $m$ edges. If its incidence energy $\BE(\h)$ is a rational number, then it is an integer. Moreover:
	\begin{enumerate}
		\item[(a)] If $k$ is even, then $\BE(\h)$ is also even.
		\item[(b)] If $k$ is odd, then $\BE(\h)$ and $m$ have the same parity.
	\end{enumerate}
\end{Teo}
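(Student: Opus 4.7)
The plan is to derive the integrality claim from Theorem \ref{teo:grafosub} and Lemma \ref{teo:never-odd}, and then extract the parity from the elementary identity obtained by squaring $\BE(\h)$, combined with a short algebraic-integer argument.

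For the first assertion, I would apply Theorem \ref{teo:grafosub} to write $\BE(\h)=\tfrac12\E(\A_\s)$. If $\BE(\h)\in\mathbb{Q}$, then $\E(\A_\s)=2\BE(\h)\in\mathbb{Q}$, and Lemma \ref{teo:never-odd} forces $\E(\A_\s)$ to be an even integer. Hence $\BE(\h)\in\mathbb{Z}$.

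For the parity, write $\BE(\h)=\sigma_1+\cdots+\sigma_m$, where $\sigma_i^2=\mu_i$ and $\mu_1,\ldots,\mu_m$ are the eigenvalues of $\B^T\B$. By Lemma \ref{teo:multigrafo}, $\B^T\B=k\mathbf{I}+\A_\lin$, and since the line multigraph has no loops, $\sum_i\mu_i=km+\operatorname{tr}(\A_\lin)=km$. Squaring yields
\[
\BE(\h)^2=\sum_{i=1}^m\mu_i+2\!\!\sum_{1\le i<j\le m}\!\!\sqrt{\mu_i\mu_j}=km+2S,
\]
where $S:=\sum_{i<j}\sqrt{\mu_i\mu_j}$. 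The main step is to show that $S\in\mathbb{Z}$. Each $\mu_i$ is an eigenvalue of the integer matrix $\B^T\B$, hence an algebraic integer; therefore each $\sqrt{\mu_i}$, and a fortiori each $\sqrt{\mu_i\mu_j}$, is an algebraic integer, and so is $S$. Since $\BE(\h)$ is already known to be an integer, the displayed identity gives $2S=\BE(\h)^2-km\in\mathbb{Z}$, so $S$ is rational. A rational algebraic integer is an ordinary integer, so $S\in\mathbb{Z}$ and $2S$ is even.

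Consequently $\BE(\h)^2\equiv km\pmod 2$, and since $n^2\equiv n\pmod 2$ for every integer $n$, we obtain $\BE(\h)\equiv km\pmod 2$. If $k$ is even this gives $\BE(\h)\equiv 0\pmod 2$, proving (a); if $k$ is odd it reduces to $\BE(\h)\equiv m\pmod 2$, proving (b). The only delicate point is the algebraic-integer step that promotes the a priori half-integer $S$ to an actual integer; everything else is routine bookkeeping.
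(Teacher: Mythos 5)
Your proof is correct, and its skeleton coincides with the paper's: integrality follows from Theorem \ref{teo:grafosub} together with Lemma \ref{teo:never-odd}, and the parity comes from squaring $\BE(\h)$ to get $\BE(\h)^2 = km + 2S$ with $S=\sum_{i<j}\sigma_i\sigma_j$. Where you genuinely diverge is in the justification that $S$ is an integer. The paper stays inside graph spectral theory: it invokes Lemma \ref{lem:soma-prod} (sums and products of graph eigenvalues are again eigenvalues of suitable auxiliary graphs) to realize $S$ as an eigenvalue of a graph, and then Lemma \ref{lem:inter} (a rational graph eigenvalue is an integer). You instead argue directly with algebraic integers: each $\mu_i$ is an eigenvalue of the integer matrix $\B^T\B$, hence an algebraic integer, each $\sqrt{\mu_i}$ is an algebraic integer as a root of the monic polynomial $x^2-\mu_i$ over the ring of algebraic integers, so $S$ is an algebraic integer; being rational (since $2S=\BE(\h)^2-km\in\mathbb{Z}$), it is an ordinary integer. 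Both arguments are sound and of comparable length; yours is more self-contained, since it avoids constructing the auxiliary graphs $\g_+$ and $\g_\times$ and in fact subsumes the content of Lemma \ref{lem:inter} (whose standard proof is exactly the rational-algebraic-integer argument). A small shortcut available to you: by Proposition \ref{lem:grafosub} each $\sigma_i$ is itself an eigenvalue of the integer matrix $\A_\s$, so you could get that $\sigma_i$ is an algebraic integer in one step without passing through $x^2-\mu_i$. Your explicit closing congruence $\BE(\h)^2\equiv\BE(\h)\pmod 2$ is the same bookkeeping the paper leaves implicit.
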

\begin{proof}
If $\BE$ is rational, then by Theorem \ref{teo:grafosub}, $\E(\A_\s)$ is rational and by Lemma~\ref{teo:never-odd}, we know that $\E (\A_\s)$ is even, thus $\BE = \frac{1}{2}\E (\A_\s)$ is integer. Now, we notice
	$$(\BE)^2 = (\sigma_1+\cdots+\sigma_n)^2 = \sum_{i=1}^n\sigma_i^2 + 2\!\!\sum_{1\leq i < j \leq n}\sigma_i\sigma_j = km + 2\!\!\sum_{1\leq i < j \leq n}\sigma_i\sigma_j.$$
	If $\BE$ is integer, then $\sum\sigma_i\sigma_j$ must be rational. By Lemma \ref{lem:soma-prod} this sum is an eigenvalue of a graph. By Lemma \ref{lem:inter} this sum must be an integer, denote $p = \sum\sigma_i\sigma_j$. That is, $(\BE)^2 = km +2p$. Therefore, if $k$ or $m$ are even, then $\BE$ is even too, otherwise $\BE$ is odd.
\end{proof}

\begin{Lem}[See \cite{spectraofgraphs}]\label{lem:entralacamento} If $\mathbf{R}$ and $\mathbf{S}$ are symmetric matrices with $n$ rows, then 	$$\lambda_i(\mathbf{R}+\mathbf{S}) \geq \max\{\lambda_i(\mathbf{R}), \lambda_i(\mathbf{S})\}, \quad \forall i=1,\ldots,n.$$
\end{Lem}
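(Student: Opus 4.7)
The plan is to derive this from Weyl's inequality for eigenvalues of sums of symmetric matrices, the fundamental tool being the Courant--Fischer minimax characterization
$$\lambda_i(\mathbf{R}) \;=\; \max_{\dim U = i}\; \min_{0 \neq x \in U}\; \frac{x^T \mathbf{R}\, x}{x^T x},$$
which is valid for any real symmetric matrix $\mathbf{R}$ of order $n$.

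First I would pick an $i$-dimensional subspace $U^\star$ realizing the outer maximum for $\mathbf{R}$, so that $x^T \mathbf{R}\, x / x^T x \geq \lambda_i(\mathbf{R})$ for every nonzero $x \in U^\star$. Using the same $U^\star$ as a trial subspace for $\mathbf{R}+\mathbf{S}$ gives
$$\lambda_i(\mathbf{R}+\mathbf{S}) \;\geq\; \min_{0 \neq x \in U^\star} \frac{x^T(\mathbf{R}+\mathbf{S})x}{x^T x} \;\geq\; \lambda_i(\mathbf{R}) + \lambda_n(\mathbf{S}),$$
which is one half of Weyl's inequality; swapping the roles of $\mathbf{R}$ and $\mathbf{S}$ produces the symmetric bound $\lambda_i(\mathbf{R}+\mathbf{S}) \geq \lambda_i(\mathbf{S}) + \lambda_n(\mathbf{R})$.

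In the setting in which this lemma is applied in Sections \ref{sec:incidence} and \ref{sec:laplacian} (signless Laplacian and incidence-type matrices, all of which are of the form $\B\B^T$), both $\mathbf{R}$ and $\mathbf{S}$ are positive semidefinite, hence $\lambda_n(\mathbf{R}), \lambda_n(\mathbf{S}) \geq 0$. The two Weyl bounds then simplify to $\lambda_i(\mathbf{R}+\mathbf{S}) \geq \lambda_i(\mathbf{R})$ and $\lambda_i(\mathbf{R}+\mathbf{S}) \geq \lambda_i(\mathbf{S})$, whose conjunction is exactly the claimed max inequality.

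The main obstacle—and the only point that really requires care—is that, read literally for arbitrary symmetric matrices, the statement is false: taking $\mathbf{R} = \mathbf{I}_n$ and $\mathbf{S} = -\mathbf{I}_n$ breaks it at once. So the proof hinges on understanding the lemma with positive semidefiniteness of the summands implicit, as in the cited reference and as will be the case at every invocation in the sequel. Granted that, the whole argument is just a one-line minimax computation plus the nonnegativity of $\lambda_n$.
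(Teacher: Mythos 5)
Your argument is correct, and it is the standard one: Courant--Fischer gives Weyl's bound $\lambda_i(\mathbf{R}+\mathbf{S}) \geq \lambda_i(\mathbf{R}) + \lambda_n(\mathbf{S})$, and nonnegativity of $\lambda_n(\mathbf{S})$ for positive semidefinite $\mathbf{S}$ yields the claim. The paper offers no proof of its own here --- the lemma is quoted from the reference --- so there is no alternative argument to compare against; what you wrote is essentially the proof one would find in the cited source.

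Your most valuable contribution is the caveat you raise yourself: as literally stated, for \emph{arbitrary} symmetric $\mathbf{R}$ and $\mathbf{S}$ the inequality is false ($\mathbf{R}=\mathbf{I}_n$, $\mathbf{S}=-\mathbf{I}_n$ gives $\lambda_i(\mathbf{R}+\mathbf{S})=0<1=\max\{\lambda_i(\mathbf{R}),\lambda_i(\mathbf{S})\}$), so a positive semidefiniteness hypothesis on the summands is genuinely needed and is missing from the statement as printed. This does not damage the paper: the lemma's sole invocation is to the decomposition $\Q(\h)=\Q(\h-e)+\Q(\h[e])$, where both summands are of the form $\B\B^T$ and hence positive semidefinite, exactly as you note. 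One small streamlining: you do not need the full two-sided symmetry of Weyl's inequality; the single statement ``if $\mathbf{S}$ is positive semidefinite then $\lambda_i(\mathbf{R}+\mathbf{S})\geq\lambda_i(\mathbf{R})$,'' applied twice with the roles of the two (both PSD) summands exchanged, already gives the max.
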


\begin{Pro}
	Let $\h=(V,E)$ be a $k$-graph. For each edge $e \in E(\h)$, we have
	$$\BE(\h) > \BE(\h-e).$$
\end{Pro}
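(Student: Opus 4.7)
The plan is to exploit the observation that removing an edge from $\h$ corresponds to deleting a single column of the incidence matrix $\B$, which perturbs the signless Laplacian $\Q = \B\B^T$ by a rank-one positive semidefinite matrix. Concretely, if $\mathbf{b}_e$ denotes the column of $\B$ indexed by the edge $e$, and if $\Q' := \B'(\B')^T$ is the signless Laplacian of $\h-e$ (where $\B'$ is $\B$ with the column $\mathbf{b}_e$ deleted), then expanding the product $\B\B^T$ columnwise gives the decomposition $\Q = \Q' + \mathbf{b}_e\mathbf{b}_e^T$.

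From here I would invoke Lemma \ref{lem:entralacamento} with $\mathbf{R} = \Q'$ and $\mathbf{S} = \mathbf{b}_e\mathbf{b}_e^T$; since $\mathbf{b}_e\mathbf{b}_e^T$ is positive semidefinite (its unique nonzero eigenvalue being $\|\mathbf{b}_e\|^2 = k$), the lemma yields $\lambda_i(\Q) \geq \lambda_i(\Q')$ for every $i = 1,\ldots,n$. Both $\Q$ and $\Q'$ are positive semidefinite, so all eigenvalues lie in $[0,\infty)$ where $\sqrt{\cdot}$ is monotone. Taking square roots and summing gives the weak inequality
\[
\BE(\h) = \sum_{i=1}^n \sqrt{\lambda_i(\Q)} \;\geq\; \sum_{i=1}^n \sqrt{\lambda_i(\Q')} = \BE(\h-e).
\]

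The main obstacle is upgrading $\geq$ to the strict inequality $>$, since Lemma \ref{lem:entralacamento} only delivers weak monotonicity. To handle this I would compare traces:
\[
\sum_{i=1}^n \lambda_i(\Q) - \sum_{i=1}^n \lambda_i(\Q') = \mathrm{tr}\!\left(\mathbf{b}_e\mathbf{b}_e^T\right) = \|\mathbf{b}_e\|^2 = k,
\]
the last equality using that $\mathbf{b}_e$ has exactly $k$ entries equal to $1$. If every inequality $\lambda_i(\Q) \geq \lambda_i(\Q')$ were actually an equality, the two traces would coincide, contradicting the value $k>0$. Hence at least one index $i$ must satisfy $\lambda_i(\Q) > \lambda_i(\Q')$, and strict monotonicity of $\sqrt{\cdot}$ on $[0,\infty)$ makes that same index contribute strictly to the sum, yielding $\BE(\h) > \BE(\h-e)$.
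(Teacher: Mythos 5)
Your proposal is correct and follows essentially the same route as the paper: the paper writes the same rank-one decomposition as $\Q(\h) = \Q(\h-e) + \Q(\h[e])$ (where $\Q(\h[e]) = \mathbf{b}_e\mathbf{b}_e^T$), applies the same interlacing lemma to get weak monotonicity of the eigenvalues, and derives strictness from the same trace comparison. Your write-up just makes the final step (one strictly larger eigenvalue plus strict monotonicity of the square root) slightly more explicit.
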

\begin{proof}
	Let $\h-e=(V,E\smallsetminus\{e\})$ and $\h[e] = (V,\{e\})$ be subgraphs of $\h$. We see that $\;\Q(\h) = \Q(\h-e) + \Q(\h[e]).\;$ By Lemma~\ref{lem:entralacamento}, we have $\lambda_i(\Q(\h)) \geq \lambda_i(\Q(\h-e))$ for each $i \in V$, so $\BE(\h) \geq \BE(\h-e)$. Now we suppose, by way of	contradiction, that  $\lambda_i(\Q(\h)) = \lambda_i(\Q(\h-e))$ for all $i \in V$, thus $\mathrm{Tr}(\Q(\h)) = \mathrm{Tr}(\Q(\h-e))$, so $\mathrm{Tr}(\Q(\h[e])) = 0$, which is a contradiction. Therefore the inequality must be strict.
\end{proof}

\begin{Cor}
	The complete $k$-graph $\mathcal{K}_n$ has the highest incidence energy between the $k$-graphs with $ n $ vertices. That is, if $ \h $ is a $k$-graph with $ n $ vertices, then
	$$\BE(\h) \leq \sqrt{\frac{k(n-1)!}{(k-1)!(n-1)!}} + (n-1)\sqrt{\frac{(n-2)!}{(k-1)!(n-k-1)!}}.$$
\end{Cor}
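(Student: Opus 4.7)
The plan is to invoke the strict monotonicity established in the preceding Proposition, namely $\BE(\h) > \BE(\h - e)$ for every edge $e$ of $\h$. Since every $k$-graph $\h$ on $n$ vertices is a subgraph of $\mathcal{K}_n$, I can start from $\mathcal{K}_n$ and delete the edges in $E(\mathcal{K}_n) \smallsetminus E(\h)$ one at a time. Each such deletion strictly decreases the incidence energy, so a straightforward induction on the number of removed edges gives $\BE(\h) \leq \BE(\mathcal{K}_n)$, with equality precisely when $\h = \mathcal{K}_n$.

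Once this reduction is in place, the numerical bound is just a matter of quoting Example \ref{exe:comp}, which computed $\BE(\mathcal{K}_n)$ explicitly in terms of the eigenvalues $\rho(\Q) = \frac{k(n-1)!}{(k-1)!(n-k)!}$ and $\lambda = \frac{(n-2)!}{(k-1)!(n-k-1)!}$ (the latter with multiplicity $n-1$). Substituting these gives the displayed expression. The edge case where $\h$ has no edges is immediate: by definition $\BE(\h) = 0$, which is trivially below $\BE(\mathcal{K}_n)$.

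There is no real obstacle here, since the hard work (monotonicity under edge deletion) has already been done. The only thing worth flagging is an apparent typographical slip in the statement: the denominator $(n-1)!$ under the first square root should read $(n-k)!$ to agree with the value of $\BE(\mathcal{K}_n)$ in Example \ref{exe:comp}. The proof itself is unaffected by this correction.
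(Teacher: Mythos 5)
Your argument is correct and is essentially the paper's own: the corollary appears immediately after the proposition $\BE(\h) > \BE(\h-e)$ with no separate proof, the intended reasoning being exactly the iterated application of that monotonicity to strip $\mathcal{K}_n$ down to $\h$, followed by the explicit value of $\BE(\mathcal{K}_n)$ from Example \ref{exe:comp}. Your remark that the denominator $(n-1)!$ in the first radical is a typographical slip for $(n-k)!$ is also correct.
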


\subsection{Bounds for the incidence energy}

\begin{Teo}
	If $\h$ is a $k$-graph with $n$ vertices and $m$ edges, then
	$$\sqrt{km} \leq \BE(\h) \leq \sqrt{kmn}.$$
Moreover, the first inequality is attained if and only if $\h$ has at most one edge, while the second inequality is attained if and only if the hypergraph has no edges.
\end{Teo}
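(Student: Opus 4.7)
The plan is to reduce everything to the singular values of $\B$ and to the identity $\sum_{i=1}^n\sigma_i^2 = km$. Recall that the squared singular values of $\B$ are the eigenvalues of $\Q=\B\B^T$, so
\[\sum_{i=1}^n\sigma_i^2 \;=\; \mathrm{Tr}(\Q) \;=\; \sum_{v\in V}d(v) \;=\; km,\]
since the hypergraph is $k$-uniform. From this one squared identity both bounds fall out by elementary manipulations.

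For the upper bound, I would apply the Cauchy--Schwarz inequality to $(\sigma_1,\dots,\sigma_n)$ against $(1,\dots,1)$:
\[\BE(\h)^2 \;=\; \Bigl(\sum_{i=1}^n \sigma_i\Bigr)^2 \;\leq\; n\sum_{i=1}^n\sigma_i^2 \;=\; nkm,\]
which gives $\BE(\h)\leq \sqrt{kmn}$. For equality, Cauchy--Schwarz forces $\sigma_1=\cdots=\sigma_n$. If there is at least one edge, then $\sum\sigma_i^2=km>0$, so the common value would be positive. Since $\B$ has only $m$ columns, $\mathrm{rank}(\B)\leq m$, so at least $n-m$ singular values vanish; if $m<n$ this already yields a contradiction, while if $m\geq n$ the common positive value would force $\B\B^T = c\mathbf{I}_n$ for some $c>0$, meaning no two vertices lie in a common edge, which is impossible when $k\geq 2$ and $m\geq 1$. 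Hence equality holds if and only if $m=0$.

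For the lower bound, expand the square without discarding cross terms:
\[\BE(\h)^2 \;=\; \sum_{i=1}^n\sigma_i^2 + 2\!\!\sum_{1\leq i<j\leq n}\!\!\sigma_i\sigma_j \;=\; km + 2\!\!\sum_{1\leq i<j\leq n}\!\!\sigma_i\sigma_j \;\geq\; km,\]
since singular values are nonnegative. Thus $\BE(\h)\geq \sqrt{km}$. Equality requires $\sigma_i\sigma_j=0$ for all $i<j$, i.e.\ at most one singular value is positive, i.e.\ $\mathrm{rank}(\B)\leq 1$. If $m=0$ this is trivial and $\BE(\h)=0=\sqrt{km}$; if $m=1$ then $\B$ is a single column with $k$ ones and $\sigma_1=\sqrt{k}$, matching $\sqrt{km}$. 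If $m\geq 2$, then $\mathrm{rank}(\B)\leq 1$ would force all columns of $\B$ to be proportional, and since they are $0/1$ columns of equal column-sum $k$, they must be identical, contradicting the fact that distinct edges give distinct incidence columns. Therefore equality holds if and only if $\h$ has at most one edge.

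The only delicate step is the equality analysis: the inequalities themselves are immediate from $\sum\sigma_i^2=km$, but ruling out equality when $\h$ has edges (for the upper bound) or more than one edge (for the lower bound) requires the combinatorial observation that incidence columns of $0/1$ type with sum $k$ cannot be proportional unless equal, together with the rank bound $\mathrm{rank}(\B)\leq \min(n,m)$.
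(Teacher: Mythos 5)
Your proposal is correct and follows essentially the same route as the paper: both rest on the identity $\sum_i\sigma_i^2=\mathrm{Tr}(\Q)=km$, derive the upper bound from Cauchy--Schwarz and the lower bound from $\bigl(\sum_i\sigma_i\bigr)^2\geq\sum_i\sigma_i^2$, and characterize equality via $\mathrm{rank}(\B)\leq 1$ (lower) and all singular values equal, i.e.\ $\B\B^T=c\mathbf{I}$ (upper). Your equality analysis is in fact somewhat more detailed than the paper's, which states the same conclusions more tersely.
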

\begin{proof}
	First, we notice that
	$$\BE = \sum_{i=1}^n\sigma_i \geq \sqrt{\sum_{i=1}^n\sigma^2_i} = \sqrt{km}.$$
	This inequality  can only be attained if at most one singular value is nonzero. That is $\mathrm{rank}(\B) \leq 1$, so $\h$ must not have more than one edge.
	
	 Now, using the Cauchy-Schwarz inequality, we obtain
	$$\BE = \sum_{i=1}^n\sigma_i \leq \sqrt{n\sum_{i=1}^n\sigma^2_i}= \sqrt{kmn}.$$
	However, this inequality is attained when all singular values are equal. Thus, $ \B\B^T = \sigma \mathbf{I} $, so $ \A_\cli = \mathbf{0} $, therefore $ \h $ should have no edges.
\end{proof}

\begin{Teo}
	Let $\h$ be a $k$-graph with $m$ edges. If $\mathrm{rank}(\B) = r$, then $$\BE(\h) \leq \sqrt{kmr} \leq \sqrt{k}m.$$
	Equality holds if  and only if $\h$ is formed by disjoint edges.
\end{Teo}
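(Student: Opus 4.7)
The plan is to chain two inequalities, the first coming from Cauchy-Schwarz applied to the nonzero singular values of $\B$ and the second from an elementary bound on the rank of $\B$.

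First I would recall that $\BE(\h) = \sum_{i=1}^{n}\sigma_{i}$, where $\sigma_{1},\ldots,\sigma_{n}$ are the singular values of $\B$, and that $\sum_{i=1}^{n}\sigma_{i}^{2} = \mathrm{tr}(\B\B^{T}) = \mathrm{tr}(\Q) = \sum_{v \in V}d(v) = km$, since each edge contributes $k$ to the sum of degrees of a $k$-graph. If $\mathrm{rank}(\B)=r$, exactly $r$ of these singular values are nonzero, so applying the Cauchy--Schwarz inequality to the vector $(\sigma_{1},\ldots,\sigma_{r})$ against the all-ones vector in $\mathbb{R}^{r}$ gives
\[
\BE(\h) = \sum_{i=1}^{r}\sigma_{i} \leq \sqrt{r\sum_{i=1}^{r}\sigma_{i}^{2}} = \sqrt{kmr}.
\]
For the second inequality, I would simply note that $\B$ has $m$ columns, hence $r = \mathrm{rank}(\B)\leq m$, so $\sqrt{kmr}\leq \sqrt{km\cdot m} = \sqrt{k}\,m$.

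The interesting part is the equality analysis, which will be the main obstacle. Equality in Cauchy--Schwarz forces all nonzero singular values to coincide, and equality $r=m$ forces the columns of $\B$ to be linearly independent. Combining both conditions means that all $m$ singular values of $\B$ equal $\sqrt{km/m} = \sqrt{k}$, so the $m\times m$ matrix $\B^{T}\B$ has all eigenvalues equal to $k$, i.e.\ $\B^{T}\B = k\mathbf{I}$. By Lemma \ref{teo:multigrafo}, $\B^{T}\B = k\mathbf{I}+\A_{\lin}$, so $\A_{\lin}=\mathbf{0}$. This means the line multigraph has no edges, which by definition means that every pair of hyperedges of $\h$ is vertex-disjoint, i.e.\ $\h$ is a disjoint union of edges.

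Conversely, I would verify that if $\h$ consists of $m$ pairwise disjoint edges, then the columns of $\B$ are supported on disjoint sets of rows, hence orthogonal and each of squared norm $k$; therefore $\B^{T}\B = k\mathbf{I}$, every singular value equals $\sqrt{k}$, and $\BE(\h)=m\sqrt{k}$, which matches both $\sqrt{kmr}$ (with $r=m$) and $\sqrt{k}\,m$. Thus equality holds throughout exactly for disjoint unions of edges, completing the proof.
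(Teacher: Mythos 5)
Your proof is correct and follows essentially the same route as the paper: a Cauchy--Schwarz estimate on the $r$ nonzero singular values of $\B$ together with $\sum_i\sigma_i^2=\mathrm{Tr}(\B\B^T)=km$ and the trivial bound $r\leq m$ (the paper phrases the singular values as $\sqrt{k+\lambda_i(\A_\lin)}$ and tracks $\mathrm{Tr}(\A_\lin)=0$, but this is the same computation). Your equality analysis is in fact slightly more explicit than the paper's, since you separately account for equality in both inequalities and verify the converse.
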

\begin{proof}
Using Cauchy-Schwarz inequality, we notice that
	\begin{eqnarray}\notag\BE &=& \sum_{i=1}^{m}\sqrt{k+\lambda_i(\lin)} = \sum_{i=1}^{r}\sqrt{k+\lambda_i(\lin)} \leq \sqrt{r \sum_{i=1}^{r}(k+\lambda_i(\lin))}\\\notag &=& \sqrt{r(kr+\mathrm{Tr}(\A_\lin)+k(m-r))} = \sqrt{kmr} \leq \sqrt{km^2} = \sqrt{k}m. \end{eqnarray}
Further, equality holds only when all eigenvalues of $\lin(\h)$ are equal. But as $\mathrm{Tr}(\A_\lin) = 0$, so $\A_\lin = \mathbf{0}$. That is, $\lin$ must have only isolated vertices, therefore $\h$ must have disjoint edges.
\end{proof}

\begin{Lem}\label{lem:cota-rho-index}
	If $\h$ is a $k$-graph on $n$ vertices and $m$ edges, then
	$$\BE(\h) \leq  \sqrt{\rho} + \sqrt{(n-1)(km-\rho)}.$$
	Also, if $\h$ is complete, then the equality holds.
\end{Lem}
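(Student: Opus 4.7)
The plan is to apply Cauchy--Schwarz to the sum of singular values of $\B$ after peeling off the largest one. I would first record the key identity $\mathrm{Tr}(\Q) = \mathrm{Tr}(\B\B^T) = \sum_{v \in V}\sum_{e \in E} b(v,e)^2 = \sum_{e \in E}|e| = km$, so that $\sum_{i=1}^n \lambda_i(\Q) = km$. Combined with the observation (already made in the section) that $\BE(\h) = \sum_{i=1}^n \sqrt{\lambda_i(\Q)}$, this is the only arithmetic input we need.

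Next I would label the eigenvalues in decreasing order so that $\lambda_1(\Q) = \rho$, and write
\[
\BE(\h) \;=\; \sqrt{\rho} \;+\; \sum_{i=2}^{n}\sqrt{\lambda_i(\Q)}.
\]
Applying the Cauchy--Schwarz inequality to the $n-1$ terms of the tail sum (dotted against the all-ones vector of length $n-1$) gives
\[
\sum_{i=2}^{n}\sqrt{\lambda_i(\Q)} \;\leq\; \sqrt{(n-1)\sum_{i=2}^{n}\lambda_i(\Q)} \;=\; \sqrt{(n-1)(km-\rho)},
\]
where the last equality uses the trace identity above. Adding $\sqrt{\rho}$ to both sides delivers the claimed bound.

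For the equality statement, I would invoke Example \ref{exe:comp}, which computes the $\Q$-spectrum of $\mathcal{K}_n$: one eigenvalue equal to $\rho$ and the remaining $n-1$ eigenvalues all equal to the same value $\lambda = (n-2)!/\bigl((k-1)!(n-k-1)!\bigr)$. Since the tail eigenvalues are all equal, Cauchy--Schwarz is tight, so equality holds in the complete case.

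There is no serious obstacle: the only technical point is verifying $\mathrm{Tr}(\Q)=km$ (immediate from the definition of $\B$) and checking that equality in Cauchy--Schwarz matches the equality case described in Example \ref{exe:comp}. Both are routine, so I would keep the write-up very brief.
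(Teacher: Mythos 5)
Your proposal is correct and follows exactly the same route as the paper's proof: peel off $\sqrt{\rho}$, apply Cauchy--Schwarz to the remaining $n-1$ terms, use the trace identity $\sum_i\lambda_i(\Q)=km$, and verify equality for $\mathcal{K}_n$ via Example \ref{exe:comp}. The only difference is that you spell out the computation $\mathrm{Tr}(\Q)=km$ explicitly, which the paper takes for granted.
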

\begin{proof}
	We observe that, $\displaystyle \sum_{i=2}^n\lambda_i(\Q) = km-\rho(\Q)$ and thus, by Cauchy-Schwarz inequality we have
	$$\BE = \sqrt{\rho}+\sum_{i=2}^n\sqrt{\lambda_i} \leq \sqrt{\rho}+\sqrt{(n-1)\sum_{i=2}^n\lambda_i} = \sqrt{\rho} + \sqrt{(n-1)(km-\rho)}.$$
	To finish the proof, we observe that for a complete $k$-graph, all signless Laplacian eigenvalues distinct from the spectral radius, are equal to $\frac{km-\rho}{n-1}$, see Example \ref{exe:comp}.
\end{proof}

Let $\h$ be a hypergraph. Its \textit{Zagreb index} is defined as the sum of the squares of the degrees of its vertices. More precisely $$Z(\h) = \sum_{v \in V(\h)}d(v)^2.$$ This is an important parameter in graph theory, having chemistry applications, \cite{energy-Zagreb}. We define the following auxiliary value $\mathfrak{Z}(\h) =k\sqrt{\frac{1}{n}Z(\h)}$.

\begin{Lem}[Theorem 13, \cite{Kaue-lap}]\label{teo:qregular}
	Let $\h$ be a connected $k$-graph on $n$ vertices and $\Q$ its signless Laplacian matrix. The hypergraph $\h$ is regular if and only if $\x=\left(\frac{1}{\sqrt{n}},\ldots,\frac{1}{\sqrt{n}}\right)$, is an eigenvector from $\rho(\Q)$.
\end{Lem}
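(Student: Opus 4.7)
The plan is to read off both implications from the identity
\[ (\Q\x)_u \;=\; \sum_{e\in E_{[u]}} x(e) \qquad \forall u\in V(\h) \]
recorded at the end of Section~\ref{sec:pre}, specialized to the candidate vector $\x=\tfrac{1}{\sqrt{n}}(1,\ldots,1)$. For this $\x$ one has $x(e)=k/\sqrt{n}$ for every $e\in E(\h)$ since $\h$ is $k$-uniform, so
\[ (\Q\x)_u \;=\; \frac{k\,d(u)}{\sqrt{n}} \qquad \forall u\in V(\h). \]
This single computation drives the whole argument: the vector $\x$ is an eigenvector of $\Q$ at all precisely when $d(u)$ is constant, and in that case the eigenvalue is $k\,d(u)$.

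For the forward direction, assume $\h$ is $r$-regular. The display above gives $(\Q\x)_u = (kr)\,x_u$ for every $u$, so $\x$ is an eigenvector with eigenvalue $kr$. It remains to identify $kr$ with $\rho(\Q)$. I would invoke Perron--Frobenius on the symmetric nonnegative matrix $\Q=\D+\A_\cli$: since $\h$ is connected, the clique multigraph $\cli(\h)$ is connected (two vertices lie in a common hyperedge iff the corresponding entry of $\A_\cli$ is positive), hence $\A_\cli$, and therefore $\Q$, is irreducible. The irreducible Perron--Frobenius theorem then says that $\rho(\Q)$ is a simple eigenvalue admitting a strictly positive eigenvector, unique up to scaling. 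Since $\x$ is strictly positive and an eigenvector, it must be the Perron eigenvector and the corresponding eigenvalue $kr$ must equal $\rho(\Q)$.

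For the converse, suppose $\Q\x=\rho(\Q)\,\x$. Comparing the $u$-th coordinate of both sides using the same display yields $\frac{k\,d(u)}{\sqrt{n}} = \frac{\rho(\Q)}{\sqrt{n}}$, so $d(u)=\rho(\Q)/k$ is independent of $u$, i.e.\ $\h$ is regular (with common degree $\rho(\Q)/k$).

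The only nontrivial step is the appeal to Perron--Frobenius, and the main obstacle is confirming that connectedness of $\h$ in the hypergraph sense really does give irreducibility of $\Q$. I would dispatch this by noting that the off-diagonal entries of $\Q$ coincide with those of $\A_\cli$, and that a path in $\cli(\h)$ between two vertices corresponds exactly to a sequence of hyperedges linking them in $\h$, so the two notions of connectedness agree. Once this is in place the argument is routine, and the lemma follows without any further spectral input beyond Perron--Frobenius.
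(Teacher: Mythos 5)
The paper does not prove this statement at all: it is imported verbatim as Theorem~13 of \cite{Kaue-lap}, so there is no in-paper argument to compare against. Your blind proof is correct and self-contained. The computation $(\Q\x)_u=\frac{k\,d(u)}{\sqrt{n}}$ for the all-ones direction is exactly the right engine: it gives the converse immediately (and without needing connectedness), and it reduces the forward direction to identifying the eigenvalue $kr$ with $\rho(\Q)$. Your appeal to Perron--Frobenius is sound, and you correctly isolate the one point that needs checking, namely that connectedness of $\h$ gives irreducibility of $\Q$ via the clique multigraph; the equivalence of the two notions of path is as you describe. Two small remarks: (i) you could bypass irreducibility entirely in the forward direction by observing that when $\h$ is $r$-regular every row sum of the nonnegative matrix $\Q$ equals $kr$, and a nonnegative matrix with constant row sums $c$ has $\rho=c$; (ii) the degenerate case of a connected $k$-graph with no edges ($n=1$, $\Q=\mathbf{0}$) is trivially consistent with the statement. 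Neither affects the validity of what you wrote.
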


\begin{Lem}
Let $\h$ be a $k$-graph. If $\Q$ is its signless Laplacian matrix, then $\rho(\Q)~\geq~\mathfrak{Z}(\h)$. Equality holds if and only if $\h$ is regular.
\end{Lem}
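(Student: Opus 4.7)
The plan is to apply the Rayleigh principle to $\Q^2$ with the all-ones test vector, exploiting the fact that $\Q=\B\B^T$ is positive semidefinite, so $\rho(\Q)^2=\lambda_{\max}(\Q^2)$. The key computational input is the identity $(\Q\x)_u=\sum_{e\in E_{[u]}}x(e)$ recorded in the preliminaries, which, applied to $\x=\mathbf{1}$, yields
$$ (\Q\mathbf{1})_u \;=\; \sum_{e\in E_{[u]}}|e| \;=\; k\,d(u), $$
that is, $\Q\mathbf{1}=k\mathbf{d}$, where $\mathbf{d}=(d(v_1),\ldots,d(v_n))^T$ is the degree vector. Squaring norms, $\|\Q\mathbf{1}\|^2=k^2\sum_v d(v)^2=k^2 Z(\h)$.

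Plugging the vector $\mathbf{1}$ into the Rayleigh quotient for $\Q^2$ then gives
$$ \rho(\Q)^2 \;\geq\; \frac{\mathbf{1}^T\Q^2\mathbf{1}}{\mathbf{1}^T\mathbf{1}} \;=\; \frac{\|\Q\mathbf{1}\|^2}{n} \;=\; \frac{k^2 Z(\h)}{n}, $$
and taking square roots produces $\rho(\Q)\geq k\sqrt{Z(\h)/n}=\mathfrak{Z}(\h)$, which is the desired bound.

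For the equality characterization I would argue as follows. Rayleigh is tight exactly when $\mathbf{1}$ lies in the $\rho(\Q)^2$-eigenspace of $\Q^2$; because $\Q$ is PSD, this eigenspace coincides with the $\rho(\Q)$-eigenspace of $\Q$. Hence $\Q\mathbf{1}=\rho(\Q)\mathbf{1}$, which combined with $\Q\mathbf{1}=k\mathbf{d}$ forces $k\mathbf{d}=\rho(\Q)\mathbf{1}$, so all vertex degrees are equal and $\h$ is regular. Conversely, if $\h$ is $r$-regular then $\Q\mathbf{1}=kr\mathbf{1}$, and since $\mathbf{1}$ is a positive eigenvector of the non-negative matrix $\Q$, Perron--Frobenius (or Lemma~\ref{teo:qregular} applied to each connected component) identifies $kr=\rho(\Q)=\mathfrak{Z}(\h)$.

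I do not foresee any serious obstacle: the identity $\Q\mathbf{1}=k\mathbf{d}$ converts the squared Rayleigh quotient at $\mathbf{1}$ into exactly $k^2 Z/n$, and the PSD structure of $\Q$ makes the passage between the eigenspaces of $\Q$ and of $\Q^2$ routine. The only point requiring a little care is the converse direction of the equality claim, where one must confirm that the eigenvalue $kr$ exhibited by the all-ones eigenvector really is the spectral radius; this is immediate from positivity of $\mathbf{1}$ together with non-negativity of $\Q$.
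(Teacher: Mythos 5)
Your proof is correct and follows essentially the same route as the paper: a Rayleigh-quotient estimate for $\Q^2$ at the (normalized) all-ones vector, using $(\Q\mathbf{1})_u = k\,d(u)$ to identify the quotient with $k^2 Z(\h)/n$. Your treatment of the equality case is in fact slightly more self-contained than the paper's, which invokes Lemma~\ref{teo:qregular} (stated only for connected hypergraphs), whereas you deduce regularity directly from $k\mathbf{d}=\rho(\Q)\mathbf{1}$ and verify the converse via Perron--Frobenius.
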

\begin{proof}
	Let $\y = (\frac{1}{\sqrt{n}},\ldots,\frac{1}{\sqrt{n}})$, so we have
	$$\rho(\Q) = \sqrt{\rho(\Q^2)} \geq \sqrt{\y^T\Q^2\y} = \sqrt{\sum_{v \in V}\frac{(kd(v))^2}{n}} = k\sqrt{\frac{1}{n}Z(\h)}.$$
We notice that the equality holds if and only if $\y = (\frac{1}{\sqrt{n}},\ldots,\frac{1}{\sqrt{n}})$ is an eigenvector of $\rho(\Q)$. By Lemma \ref{teo:qregular} it occurs only if the hypergraph is regular.
\end{proof}

The following result improves the bound of Lemma \ref{lem:cota-rho-index}.

\begin{Teo}\label{pro:cota-indice-z}
	If $\h$ is a $k$-graph on $n$ vertices and $m$ edges, then
	$$\BE(\h) \leq \sqrt{\mathfrak{Z}(\h)} + \sqrt{(n-1)(km-\mathfrak{Z}(\h))}.$$
	Also, if $\h$ is complete then equality holds.
\end{Teo}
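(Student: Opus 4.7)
The plan is to bootstrap from Lemma \ref{lem:cota-rho-index}, which already supplies $\BE(\h)\leq f(\rho(\Q))$ where $f(x)=\sqrt{x}+\sqrt{(n-1)(km-x)}$ on $[0,km]$, by replacing the spectral parameter $\rho(\Q)$ with the lower-but-more-combinatorial quantity $\mathfrak{Z}(\h)$ (recall $\mathfrak{Z}(\h)\leq \rho(\Q)$ from the preceding lemma). The key analytic fact is that $f$ is strictly concave with a unique critical point at $x_0=km/n$: a short derivative computation shows $f$ strictly increases on $[0,x_0]$ and strictly decreases on $[x_0,km]$. Consequently, provided both $\rho(\Q)$ and $\mathfrak{Z}(\h)$ lie in the decreasing interval $[x_0,km]$, the inequality $\mathfrak{Z}(\h)\leq \rho(\Q)$ reverses when passed through $f$, giving $f(\rho(\Q))\leq f(\mathfrak{Z}(\h))$, and chaining with Lemma \ref{lem:cota-rho-index} yields exactly the stated bound.

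The remaining substance is therefore to check that both $\rho(\Q)$ and $\mathfrak{Z}(\h)$ exceed $km/n$. For $\rho(\Q)$ this is immediate: $\mathrm{Tr}(\Q)=\sum_{v}d(v)=km$ and $\Q$ is positive semidefinite, so the spectral radius is at least the average eigenvalue $km/n$. For $\mathfrak{Z}(\h)=k\sqrt{Z(\h)/n}$, Cauchy--Schwarz on the degree sequence gives
$$Z(\h)=\sum_{v\in V}d(v)^2 \geq \frac{\bigl(\sum_{v}d(v)\bigr)^2}{n}=\frac{(km)^2}{n},$$
hence $\mathfrak{Z}(\h)\geq k^2 m/n\geq km/n$ since $k\geq 2$.

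For the equality assertion, if $\h=\mathcal{K}_n$ then the complete $k$-graph is regular, so the preceding lemma yields $\rho(\Q)=\mathfrak{Z}(\mathcal{K}_n)$; moreover Lemma \ref{lem:cota-rho-index} already attains equality on $\mathcal{K}_n$. Hence $\BE(\mathcal{K}_n)=f(\rho(\Q))=f(\mathfrak{Z}(\mathcal{K}_n))$, as required. The only obstacle I anticipate is ensuring that $\mathfrak{Z}(\h)$ lands on the decreasing branch of $f$; without this, $\mathfrak{Z}\leq\rho$ alone would push $f$ in the wrong direction, but as shown this amounts to the one-line Cauchy--Schwarz bound on the degree sequence above.
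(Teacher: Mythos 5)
Your proof is correct and follows essentially the same route as the paper's: establish that $f(x)=\sqrt{x}+\sqrt{(n-1)(km-x)}$ is decreasing for $x\geq km/n$, verify via Cauchy--Schwarz on the degree sequence that $\mathfrak{Z}(\h)\geq km/n$, and then chain $\mathfrak{Z}(\h)\leq\rho(\Q)$ through $f$ together with Lemma \ref{lem:cota-rho-index}. Your treatment of the equality case (regularity of $\mathcal{K}_n$ forcing $\rho(\Q)=\mathfrak{Z}$) is likewise the intended argument.
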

\begin{proof}
First, notice that $f(x) = \sqrt{x} + \sqrt{(n-1)(km-x)}$ is decreasing if $x > \frac{km}{n}$. To prove it, we observe that, if $x > \frac{km}{n}$, then $f'(x) < 0$. Now, we notice
	$$\sum_{v \in V}d(v) \leq \sqrt{n\sum_{v \in V}d^2(v)} \quad \Rightarrow \quad \frac{km}{n} \leq \sqrt{\frac{1}{n}\sum_{v \in V}d^2(v)} < \mathfrak{Z}(\h).$$
	As $\rho(\Q) \geq \mathfrak{Z}(\h)$, so the result follows.
\end{proof}

\begin{Lem}\label{lem:traco}
	If $\h$ is a $k$-graph with $n$ vertices, then
	$$\sum_{i=1}^n\lambda_i^2(\Q) \leq kZ(\h).$$
	The equality holds if and only if $\h$ is formed by disjoint edges.
\end{Lem}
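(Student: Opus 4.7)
The plan is to interpret both sides of the inequality combinatorially in terms of pairwise intersections of edges, and then compare term by term.

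First, I would write $\sum_{i=1}^n\lambda_i^2(\Q)=\mathrm{Tr}(\Q^2)=\sum_{u,v\in V}\Q_{uv}^2$. Since $\Q=\B\B^T$, the entry $\Q_{uv}$ equals the number of edges of $\h$ containing both $u$ and $v$, so
\[
\Q_{uv}=\sum_{e\in E}\mathbf{1}_{\{u,v\in e\}},\qquad \Q_{uv}^{2}=\sum_{e,f\in E}\mathbf{1}_{\{u,v\in e\}}\mathbf{1}_{\{u,v\in f\}}.
\]
Swapping the order of summation gives the clean identity $\mathrm{Tr}(\Q^2)=\sum_{e,f\in E}|e\cap f|^2$. (Equivalently, one may use Lemma \ref{teo:multigrafo} to write $\mathrm{Tr}(\Q^2)=\mathrm{Tr}((\B^T\B)^2)$, whose $(e,f)$-entry is $|e\cap f|$.)

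Next, I would expand the right-hand side analogously:
\[
Z(\h)=\sum_{v\in V}d(v)^2=\sum_{v\in V}\Bigl(\sum_{e\in E}\mathbf{1}_{\{v\in e\}}\Bigr)^{2}=\sum_{e,f\in E}|e\cap f|.
\]
So the claim reduces to $\sum_{e,f\in E}|e\cap f|^2\le k\sum_{e,f\in E}|e\cap f|$, i.e.\ to the elementary bound $|e\cap f|^2\le k\,|e\cap f|$, which holds term-by-term because $\h$ is $k$-uniform and hence $|e\cap f|\le k$ for all $e,f$.

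The only delicate part is the equality characterization, but it now falls out of the term-by-term comparison. Equality forces $|e\cap f|^{2}=k|e\cap f|$ for every pair $(e,f)$, i.e.\ $|e\cap f|\in\{0,k\}$. Since the edges of $\h$ are distinct $k$-subsets, $|e\cap f|=k$ can only occur when $e=f$, so equality holds if and only if any two distinct edges are disjoint, that is, $\h$ is a disjoint union of edges (together with possibly isolated vertices, which do not affect either side). I expect no real obstacle here; the main step is just recognizing the combinatorial identity $\mathrm{Tr}(\Q^2)=\sum_{e,f}|e\cap f|^2$, after which the inequality is immediate.
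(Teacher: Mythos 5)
Your proof is correct, but it takes a genuinely different route from the paper's. The paper stays on the vertex side: it writes $\Q=\D+\A_\cli$ (Lemma \ref{teo:multigrafo}), so $\mathrm{Tr}(\Q^2)=Z(\h)+\mathrm{Tr}(\A_\cli^2)$, and then bounds $\mathrm{Tr}(\A_\cli^2)=\sum_{i,j}a_{ij}^2\le(k-1)Z(\h)$ entrywise using $a_{ij}\le\min\{d(i),d(j)\}$ together with the row-sum identity $\sum_j a_{ij}=(k-1)d(i)$. You instead pass to the edge side, in effect using $\mathrm{Tr}((\B\B^T)^2)=\mathrm{Tr}((\B^T\B)^2)$ to get the identities $\mathrm{Tr}(\Q^2)=\sum_{e,f}|e\cap f|^2$ and $Z(\h)=\sum_{e,f}|e\cap f|$, after which the inequality is the termwise bound $|e\cap f|^2\le k|e\cap f|$. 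Both arguments are elementary entrywise comparisons, but over different index sets (vertex pairs versus edge pairs). What your version buys is a noticeably cleaner equality analysis: equality forces $|e\cap f|\in\{0,k\}$ for every pair, and since distinct edges are distinct $k$-sets this immediately says distinct edges are disjoint; the paper's corresponding step (``if two vertices are neighbors then they lie in exactly the same edges'') is stated more loosely and requires a small extra argument to reach the same conclusion. The paper's version, on the other hand, keeps the computation inside the $\D+\A_\cli$ framework it uses throughout, and the intermediate bound $\mathrm{Tr}(\A_\cli^2)\le(k-1)Z(\h)$ is of some independent interest. Your identity $Z(\h)=\sum_{e,f}|e\cap f|$ and the reduction to $|e\cap f|\le k$ are verified correctly, and your handling of the diagonal terms $e=f$ (which contribute $k^2=k\cdot k$ on both sides) is consistent, so there is no gap.
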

\begin{proof}
	By Lemma \ref{teo:multigrafo}, we have $\Q = \D+\A_\cli$, so $\mathrm{Tr}(\Q^2) = Z(\h) +  \mathrm{Tr}(\A^2_\cli)$. Now, we notice that
	$$\mathrm{Tr}(\A^2_\cli) = \sum_{i=1}^n\sum_{j=1}^n a_{ij}^2\stackrel{(\#)}{\leq} \sum_{i=1}^n\left( d(i)\sum_{j=1}^n a_{ij}\right)= (k-1)\sum_{i=1}^nd(i)^2 = (k-1)Z(\h).$$
	The inequality $(\#)$ is true, because $a_{ij}$ is the number of edges containing the vertices $i$ and $j$, so $a_{ij} \leq \min\{d(i),d(j)\}$. Therefore, we conclude that $\sum_{i=1}^n\lambda_i^2(\Q) = \mathrm{Tr}(\Q^2) \leq kZ(\h).$	
	
	Now observe that, the equality in $(\#)$ is achieved only for hypergraphs with the following property: If two vertices $i$ and $j$ are neighbors then they are contained in the same edges. But it is possible only in hypergraphs with disjoint edges and possibly some isolated vertices. 	
\end{proof}

\begin{Lem}[Equation 12, \cite{incidence3}]\label{lem:seq-int}
	If $a_1,\ldots,a_s$ is a sequence of non negative integers, then
	$$\sum_{i=1}^sa_i \geq \sqrt{\frac{\left(\sum_{i=1}^s a_i^2 \right)^3 }{\sum_{i=1}^s a_i^4 }}.$$
	The equality holds if and only if all positive elements of the sequence are equal.
\end{Lem}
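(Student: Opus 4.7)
The plan is to derive the inequality from two applications of the Cauchy--Schwarz inequality, stacked on top of each other. The claim is equivalent to
\[
\left(\sum_{i=1}^s a_i\right)^{\!2}\left(\sum_{i=1}^s a_i^{4}\right) \;\ge\; \left(\sum_{i=1}^s a_i^{2}\right)^{\!3},
\]
so the idea is to bridge between the exponent $2$ on the right and the exponents $1, 3, 4$ available on the left by splitting $a_i^{2}=a_i^{1/2}\cdot a_i^{3/2}$ and $a_i^{3}=a_i\cdot a_i^{2}$. These splittings are legitimate because every $a_i$ is non-negative, so the fractional powers are well defined.

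First I would apply Cauchy--Schwarz to the pair $(a_i^{1/2}, a_i^{3/2})$ to obtain $\left(\sum a_i^{2}\right)^{2} \le \left(\sum a_i\right)\left(\sum a_i^{3}\right)$. Then I would apply it again to $(a_i, a_i^{2})$ to obtain $\left(\sum a_i^{3}\right)^{2} \le \left(\sum a_i^{2}\right)\left(\sum a_i^{4}\right)$. Squaring the first estimate and plugging the second into it gives $\left(\sum a_i^{2}\right)^{4} \le \left(\sum a_i\right)^{2}\left(\sum a_i^{2}\right)\left(\sum a_i^{4}\right)$, and dividing both sides by $\sum a_i^{2}$ — or, if every $a_i$ vanishes, observing that the whole inequality is $0\ge 0$ — produces exactly the desired estimate after taking a square root.

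For the characterization of equality, I would simply trace the equality condition of Cauchy--Schwarz through each of the two steps: the first requires $(a_i^{1/2})$ and $(a_i^{3/2})$ to be proportional, and the second requires $(a_i)$ and $(a_i^{2})$ to be proportional. Each condition says exactly that all positive $a_i$ share a common value, so both together collapse to the stated characterization. I do not expect any real obstacle here, as the computations are entirely mechanical; the only subtlety is that one has to discard the indices where $a_i=0$ when checking proportionality, which the non-negativity assumption makes harmless. Incidentally, the integer hypothesis is not needed for the inequality itself — it simply matches the hypothesis of the downstream application.
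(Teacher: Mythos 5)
Your argument is correct and complete: the two Cauchy--Schwarz applications, to $(a_i^{1/2},a_i^{3/2})$ and to $(a_i,a_i^{2})$, chain together exactly as you describe, and tracing the proportionality conditions does yield the stated equality characterization. The paper itself offers no proof to compare against --- it imports the lemma verbatim from the cited reference --- so your write-up supplies a self-contained justification that the paper lacks. Your closing remark that integrality is superfluous is not merely cosmetic: in Proposition \ref{pro:cotainf} the lemma is applied with $a_i=\sqrt{\lambda_i(\Q)}$, which are non-negative reals rather than integers, so the version you actually prove (for arbitrary non-negative reals) is the one the paper needs.
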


\begin{Pro}\label{pro:cotainf}
	If $\h$ is a $k$-graph with $n$ vertices and $m$ edges, then
	$$\BE(\h) \geq \sqrt{\frac{(km)^3}{kZ(\h)}} \geq \frac{\sqrt{k}m}{\sqrt{\Delta}}.$$
	The first equality holds if  and only if $\h$ is formed by disjoint edges. The second equality holds if and only if $\h$ is formed by disjoint edges without isolated vertices or it has no edges.
\end{Pro}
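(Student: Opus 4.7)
The plan is to derive both inequalities from Lemma~\ref{lem:seq-int} applied to the singular values $\sigma_1,\ldots,\sigma_n$ of $\B$, exploiting that $\sum_i \sigma_i^2$ and $\sum_i \sigma_i^4$ have clean expressions (or bounds) in terms of standard hypergraph invariants. The whole proof then reduces to identifying $\mathrm{Tr}(\Q)$, bounding $\mathrm{Tr}(\Q^2)$, and a routine Chebyshev-type estimate on the degree sequence.

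First I would note $\sum_i \sigma_i^2 = \mathrm{Tr}(\Q) = \sum_{v \in V} d(v) = km$ by the handshake identity, and $\sum_i \sigma_i^4 = \mathrm{Tr}(\Q^2) \leq kZ(\h)$ by Lemma~\ref{lem:traco}. Substituting into the inequality $\sum_i \sigma_i \geq \sqrt{(\sum_i \sigma_i^2)^3/\sum_i \sigma_i^4}$ from Lemma~\ref{lem:seq-int} immediately gives $\BE(\h) \geq \sqrt{(km)^3/(kZ(\h))}$, the left-hand inequality. For the right-hand inequality I would invoke the elementary bound $Z(\h) = \sum_v d(v)^2 \leq \Delta \sum_v d(v) = \Delta\cdot km$; substituting into the middle expression and simplifying yields exactly $\sqrt{k}\,m/\sqrt{\Delta}$.

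For the equality analysis, the first equality forces simultaneous equality in Lemma~\ref{lem:seq-int} (all positive $\sigma_i$ agree) and in Lemma~\ref{lem:traco} ($\h$ has pairwise disjoint edges). A direct look at $\Q$ for a disjoint-edge hypergraph shows it is block-diagonal with one $J_k$ block per edge (plus zero rows for any isolated vertices), whose nonzero eigenvalue is $k$ with multiplicity $1$ per block; hence the nonzero $\sigma_i$ all equal $\sqrt{k}$ and both equality conditions are met simultaneously. The second equality additionally needs $Z(\h) = \Delta \cdot km$, which forces every vertex degree to lie in $\{0,\Delta\}$; intersecting this with the disjoint-edge structure (where the only possible degrees are $0$ and $1$) yields the cases in the statement.

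The main obstacle I anticipate is purely technical: Lemma~\ref{lem:seq-int} is cited for non-negative integer sequences, whereas the $\sigma_i$ are generally real. This is not a true obstruction, since the underlying inequality reduces to the Cauchy--Schwarz chain $(\sum a_i^2)^3 \leq (\sum a_i)^2(\sum a_i^4)$ — valid for any non-negative reals, with equality iff all positive $a_i$ coincide — so the lemma transfers essentially verbatim to this setting.
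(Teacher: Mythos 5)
Your proof follows the paper's argument essentially verbatim: both apply Lemma~\ref{lem:seq-int} to the singular values $\sigma_i=\sqrt{\lambda_i(\Q)}$, identify $\sum_i\sigma_i^2=\mathrm{Tr}(\Q)=km$, bound $\sum_i\sigma_i^4=\mathrm{Tr}(\Q^2)\leq kZ(\h)$ via Lemma~\ref{lem:traco}, and finish with $Z(\h)\leq\Delta\, km$. Your remark that Lemma~\ref{lem:seq-int} must be upgraded from integer to arbitrary non-negative real sequences (via the Cauchy--Schwarz chain $(\sum a_i^2)^3\leq(\sum a_i)^2(\sum a_i^4)$) patches a technicality the paper silently glosses over, and your equality analysis is at least as careful as the paper's own.
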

\begin{proof}
	By Lemmas \ref{lem:traco} and \ref{lem:seq-int}, we have
	$$\BE(\h) = \sum_{i=1}^n\sqrt{\lambda_i(\Q)} \stackrel{(*)}{\geq} \sqrt{\frac{\left(\sum_{i=1}^n(\sqrt{\lambda_i(\Q)})^2 \right)^3}{\sum_{i=1}^n(\sqrt{\lambda_i(\Q)})^4}} \stackrel{(**)}{\geq} \sqrt{\frac{(km)^3}{kZ(\h)}}.$$
	Now, we notice that
	$$Z(\h) = \sum_{i=1}^nd(i)^2 \leq \Delta\sum_{i=1}^nd(i) = \Delta km,$$
	therefore
	$$\BE(\h) \stackrel{(***)}{\geq} \frac{\sqrt{k}m}{\sqrt{\Delta}}.$$
	Finally, we notice that the equality in $(*)$ occur if and only if all positive eigenvalues are equal. That is, when the hypergraph is formed by disjoint edges. The equality in $(**)$ is achieved under the same conditions of $(*)$. The equality in $(***)$ occur if and only if the equality in $(*)$ occur and the hypergraph is regular. That is, when the hypergraph is formed by disjoint edges without isolated vertices or it has only isolated vertices.
\end{proof}

\begin{Lem}[Corollary 16, \cite{Kaue-lap}]\label{lem:graumed}
	If $\h$ is a $k$-graph, then $kd(\h) \leq \rho(\Q).$
\end{Lem}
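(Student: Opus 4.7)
The plan is to use the Rayleigh quotient characterization of the spectral radius, taking advantage of the fact that $\Q = \B\B^T$ is positive semi-definite, so $\rho(\Q) = \lambda_1(\Q) = \max_{\|\y\|=1} \y^T \Q \y$. Choosing a well-suited test vector should immediately give a lower bound equal to $kd(\h)$.

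Concretely, the key step will be to test against the normalized all-ones vector $\y = \frac{1}{\sqrt{n}}(1,1,\ldots,1)^T$. Using the explicit description of the action of $\Q$ recorded in the preliminaries, namely $(\Q\x)_u = \sum_{e \in E_{[u]}} x(e)$ for every $u \in V(\h)$, I would evaluate $\Q \mathbf{1}$. For each vertex $u$ and each edge $e \ni u$ we have $\mathbf{1}(e) = |e| = k$, so $(\Q\mathbf{1})_u = k\,d(u)$.

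Plugging this into the Rayleigh quotient then gives
\begin{equation*}
\rho(\Q) \;\geq\; \y^T \Q \y \;=\; \frac{1}{n}\sum_{u \in V} k\,d(u) \;=\; k \cdot \frac{1}{n}\sum_{u \in V} d(u) \;=\; k\,d(\h),
\end{equation*}
which is exactly the desired inequality.

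There is no serious obstacle here; the proof is a direct Rayleigh quotient computation. The only point worth being careful about is to invoke the formula $(\Q\x)_u = \sum_{e \in E_{[u]}} x(e)$ from Section~\ref{sec:pre} rather than manipulating $\B\B^T$ by hand, since this makes the evaluation on the all-ones vector transparent and avoids any indexing subtleties between the incidence and clique representations.
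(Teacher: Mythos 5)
Your proof is correct. The paper does not prove this lemma at all---it imports it as Corollary 16 of \cite{Kaue-lap}---so there is nothing to compare against in the text itself; your Rayleigh-quotient argument with the normalized all-ones vector, using $(\Q\mathbf{1})_u = k\,d(u)$ from the preliminaries, is the standard and complete way to establish $kd(\h) \leq \rho(\Q)$, and it is consistent with the related computation the paper does use in its lemma on $\rho(\Q) \geq \mathfrak{Z}(\h)$.
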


\begin{Teo} Let $\h$ be a $k$-graph on $n$ vertices. If $\overline{\h}$ is its complement, then
	$$\frac{\sqrt{k}\binom{n}{k}}{\sqrt{\binom{n-1}{k-1}}} \leq \BE(\h) + \BE(\overline{\h}) \leq k\sqrt{\frac{2}{n}\binom{n}{k}} + \sqrt{\frac{2k(n-1)(n-k)}{n}\binom{n}{k}}.$$
	The first equality occur if and only if $\h$ has at most $k$ vertices and one edge.
\end{Teo}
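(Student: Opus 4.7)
Write $m=|E(\h)|$ and $M=\binom{n}{k}$, so $|E(\overline{\h})|=M-m$. My plan is to establish the two halves of the double inequality separately using the bounds already developed in this section, and then pin down the equality case for the lower bound.

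For the lower bound I would apply the second inequality of Proposition~\ref{pro:cotainf} to both $\h$ and $\overline{\h}$. In any $k$-graph on $n$ vertices every vertex lies in at most $\binom{n-1}{k-1}$ hyperedges (the degree in $\mathcal{K}_n$), so $\Delta(\h),\Delta(\overline{\h})\le\binom{n-1}{k-1}$, and adding the two resulting estimates yields
\[
\BE(\h)+\BE(\overline{\h}) \;\ge\; \frac{\sqrt{k}\,m}{\sqrt{\Delta(\h)}} + \frac{\sqrt{k}\,(M-m)}{\sqrt{\Delta(\overline{\h})}} \;\ge\; \frac{\sqrt{k}\,\binom{n}{k}}{\sqrt{\binom{n-1}{k-1}}},
\]
which is the claimed left-hand inequality.

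For the upper bound I would start from Lemma~\ref{lem:cota-rho-index}: $\BE(\h)\le\sqrt{\rho}+\sqrt{(n-1)(km-\rho)}$, with $\rho=\rho(\Q(\h))$. As noted inside the proof of Theorem~\ref{pro:cota-indice-z}, this right-hand side, viewed as a function of $\rho$, is decreasing on the range $\rho>km/n$. Lemma~\ref{lem:graumed} supplies $\rho\ge k\,d(\h)=k^{2}m/n$, which already lies in the decreasing range, so substituting $\rho=k^{2}m/n$ produces
\[
\BE(\h)\;\le\;k\sqrt{m/n}+\sqrt{(n-1)(n-k)km/n},
\]
and the analogous bound holds for $\overline{\h}$ with $m$ replaced by $M-m$. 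Adding the two inequalities, factoring out the common sum $\sqrt{m}+\sqrt{M-m}$, and applying the elementary estimate $(\sqrt{m}+\sqrt{M-m})^{2}\le 2M$ (equivalently $\sqrt{m}+\sqrt{M-m}\le\sqrt{2M}$) produces exactly $k\sqrt{2M/n}+\sqrt{2kM(n-1)(n-k)/n}$, which matches the stated right-hand side after re-inserting $M=\binom{n}{k}$.

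For the equality case of the lower bound, every inequality above must be tight. The equality clause of Proposition~\ref{pro:cotainf} forces each of $\h$ and $\overline{\h}$ to be a disjoint union of edges together with isolated vertices, so $\Delta(\h),\Delta(\overline{\h})\le 1$; tightness of $\Delta\le\binom{n-1}{k-1}$ then forces $\binom{n-1}{k-1}\le 1$, i.e.\ $n\le k$, and the only feasible configurations become $n=k$ with $\h$ either empty or the single hyperedge $\mathcal{K}_{k}$, matching the stated equality case. The main obstacle in the argument is the upper bound: one has to use the decreasing branch of $f(\rho)=\sqrt{\rho}+\sqrt{(n-1)(km-\rho)}$ in the correct direction together with the average-degree lower bound $\rho\ge k^{2}m/n$, and then recognize that the two individual estimates share exactly the factor $\sqrt{m}+\sqrt{M-m}$ so that one Cauchy--Schwarz step $\sqrt{m}+\sqrt{M-m}\le\sqrt{2M}$ collapses them onto the claimed right-hand side.
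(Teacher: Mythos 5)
Your proof is correct and follows essentially the same route as the paper: the lower bound and its equality analysis are identical (Proposition~\ref{pro:cotainf} combined with $\Delta\le\binom{n-1}{k-1}$), and the upper bound uses the same three ingredients the paper does, namely the bound $\BE\le\sqrt{\rho}+\sqrt{(n-1)(km-\rho)}$ of Lemma~\ref{lem:cota-rho-index}, its monotonicity in $\rho$ beyond $km/n$, and the estimate $\rho\ge k^{2}m/n$ from Lemma~\ref{lem:graumed}. The only difference is the order of operations --- the paper first merges the two spectra via $\sqrt{a}+\sqrt{b}\le\sqrt{2(a+b)}$ and then performs a single monotone substitution for $\lambda_1+\overline{\lambda_1}$, whereas you substitute for each spectral radius separately and merge at the end with $\sqrt{m}+\sqrt{M-m}\le\sqrt{2M}$ --- and both orderings land on the identical closed form.
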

\begin{proof}
	Suppose that $\h$ and $\overline{\h}$ have $m$ and $\overline{m}$ edges, respectively. We have $m+\overline{m} = \binom{n}{k}$ and, by Proposition \ref{pro:cotainf},
	\begin{eqnarray}\notag\BE(\h) + \BE(\overline{\h}) \geq \frac{\sqrt{k}m}{\sqrt{\Delta(\h)}} + \frac{\sqrt{k}\overline{m}}{\sqrt{\Delta(\overline{\h})}}\geq \frac{\sqrt{k}\binom{n}{k}}{\sqrt{\binom{n-1}{k-1}}}.\end{eqnarray}
	For the equality to be possible, we observe that $\h$ must be formed by disjoint edges without isolated vertices or only isolated vertices, as well as its complement. In addition, it must occur $\Delta(\h) = \binom{n-1}{k-1}$ or $m = 0$. That is, $\h$ must have at most $k$ vertices and one edge.

	Let $\lambda_1\geq\cdots \geq \lambda_n$ be all eigenvalues of $\Q(\h)$ and $\overline{\lambda_1}\geq\cdots \geq \overline{\lambda_n}$ be the eigenvalues of $\Q(\overline{\h})$. By Cauchy-Schwarz inequality, we have
	\begin{eqnarray}\BE(\h) + \BE(\overline{\h}) \notag &\leq& \sqrt{\lambda_1} + \sqrt{\overline{\lambda_1}} +\sqrt{(n-1)\sum_{i=2}^n\lambda_i} +\sqrt{(n-1)\sum_{i=2}^n\overline{\lambda_i}}\\
	&\leq& \sqrt{2(\lambda_1 + \overline{\lambda_1})} + \sqrt{2(n-1)\left[k\binom{n}{k} - (\lambda_1 + \overline{\lambda_1})\right]. }\label{eq:l1+l1}
	\end{eqnarray}
	We observe that the function $f(x) = \sqrt{2x} + \sqrt{2(n-1)(k\binom{n}{k}-x)}$ is decreasing for $x \geq \frac{k}{n}\binom{n}{k}$. Now, by Lemma \ref{lem:graumed} we have
	
	\begin{equation}\label{eq:l1+l2}
	\lambda_1 + \overline{\lambda_1} \geq \frac{k^2m}{n} + \frac{k^2\overline{m}}{n} = \frac{k^2}{n}\binom{n}{k} > \frac{k}{n}\binom{n}{k}.
	\end{equation}
	Changing $\lambda_1 + \overline{\lambda_1}$ by $\frac{k^2}{n}\binom{n}{k}$ in equation (\ref{eq:l1+l1}), we obtain the desired result.
\end{proof}

\section{Signless Laplacian energy}\label{sec:laplacian}
In this section, we will study the signless Laplacian energy of hypergraphs. This energy has already been well studied for graphs (see for example \cite{energy-lap2, energy-lap5, energy-lap6, energy-lap1, energy-lap7}). Our main result relates this energy to the adjacency energy of a line multigraph. For more details about the signless Laplacian matrix, see \cite{Kaue-lap}.

\begin{Def}
	Let $\h$ be a $k$-graph. We define its signless Laplacian energy as the energy of the matrix $\Q-d(\h)\mathbf{I}$, i.e. $\QE(\h) = \E(\Q-d(\h)\mathbf{I})$.
\end{Def}

\begin{Def}
	Let $\h$ be a $k$-graph. We define $\omega(\h)$ as the number of eigenvalues of $\Q$ greater than or equal to the average degree. More precisely, if $\lambda_1\geq\cdots\geq\lambda_n$ are the eigenvalues of $\Q$, then $\lambda_\omega \geq d(\h)$ and $\lambda_{\omega+1} < d(\h)$.
\end{Def}
\begin{Pro}
	If $\h$ is a $k$-graph, then $\QE(\h) = 2\sum_{i=1}^\omega\lambda_i -2\omega d(\h).$
\end{Pro}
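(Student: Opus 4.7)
The plan is to expand the definition of $\QE(\h)$ and split the absolute values at the threshold index $\omega$, then use the trace identity to eliminate the lower half of the spectrum.

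First, I would observe that $\Q - d(\h)\mathbf{I}$ is a real symmetric matrix, so its singular values coincide with the absolute values of its eigenvalues. Since the eigenvalues of $\Q - d(\h)\mathbf{I}$ are precisely $\lambda_i(\Q) - d(\h)$ for $i=1,\ldots,n$, this gives
\[
\QE(\h) = \E\bigl(\Q - d(\h)\mathbf{I}\bigr) = \sum_{i=1}^n \bigl|\lambda_i - d(\h)\bigr|.
\]

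Next, I would use the definition of $\omega$ to split the sum according to sign: for $i\leq\omega$ the term $\lambda_i - d(\h)$ is nonnegative, while for $i\geq\omega+1$ it is negative. Thus
\[
\QE(\h) = \sum_{i=1}^\omega \bigl(\lambda_i - d(\h)\bigr) + \sum_{i=\omega+1}^n \bigl(d(\h) - \lambda_i\bigr) = \sum_{i=1}^\omega \lambda_i - \sum_{i=\omega+1}^n \lambda_i + (n-2\omega)d(\h).
\]

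The final key step is the trace identity. Since $\Q = \B\B^T$ and the diagonal entry $(\B\B^T)_{vv}$ counts the edges through $v$, we have $\mathrm{Tr}(\Q) = \sum_{v\in V} d(v) = nd(\h)$, so $\sum_{i=1}^n \lambda_i = nd(\h)$. Substituting $\sum_{i=\omega+1}^n \lambda_i = nd(\h) - \sum_{i=1}^\omega \lambda_i$ into the expression above yields
\[
\QE(\h) = 2\sum_{i=1}^\omega \lambda_i - 2\omega\, d(\h),
\]
which is the claimed identity. There is no real obstacle here; the only subtlety worth stating explicitly is the identification of singular values with $|\lambda_i - d(\h)|$, which relies on the symmetry of $\Q$.
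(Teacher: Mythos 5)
Your proposal is correct and follows essentially the same route as the paper: expand $\QE(\h)=\sum_{i=1}^n|\lambda_i-d(\h)|$, split the sum at the index $\omega$, and use the trace identity $\sum_{i=1}^n\lambda_i=\mathrm{Tr}(\Q)=km=nd(\h)$ to cancel the remaining terms. The only cosmetic difference is that the paper keeps the correction term as $nd(\h)-km=0$ while you substitute the trace directly; your explicit remark that singular values of the symmetric matrix $\Q-d(\h)\mathbf{I}$ are $|\lambda_i-d(\h)|$ is a sound (and welcome) justification of the first equality.
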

\begin{proof}
	We notice that,
	\begin{eqnarray}
	\QE(\h) &=& \sum_{i=1}^n|\lambda_i-d(\h)| = \sum_{i=1}^\omega(\lambda_i-d(\h)) + \sum_{i=\omega+1}^n(d(\h)-\lambda_i)\notag\\
	&=& 2\sum_{i=1}^\omega(\lambda_i-d(h)) + \sum_{i=1}^n(d(\h)-\lambda_i) \notag\\
	&=& 2\sum_{i=1}^\omega\lambda_i - 2\omega d(\h) + \underbrace{nd(\h) -km}_{=0}.\notag
	\end{eqnarray}
	Therefore, the result follows.
\end{proof}

\begin{Lem}[Lemma 2.21, \cite{energy-lucelia}]\label{lem:energy-subgrafo}
	If $\M$ and $\mathbf{N}$ are square matrices, then
	$$\E(\M+\mathbf{N}) \leq \E(\M) + \E(\mathbf{N}), \quad |\E(\M) - \E(\mathbf{N})| \leq  \E(\M-\mathbf{N}). $$
\end{Lem}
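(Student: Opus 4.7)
The plan is to recognize the energy $\E(\M)$ as the nuclear (Schatten-$1$, Ky Fan $n$) norm $\|\M\|_* = \sum_i \sigma_i(\M)$ on the space of $n \times n$ matrices, so that the first inequality is exactly the triangle inequality for this norm and the second falls out as a routine corollary. The cleanest route I would take is through the dual characterization
\[
\E(\M) = \max\{\mathrm{Tr}(X^T\M) : \sigma_1(X) \leq 1\}.
\]
Once this identity is available, the first inequality is immediate: for any admissible $X$ we have
\[
\mathrm{Tr}(X^T(\M+\N)) = \mathrm{Tr}(X^T\M) + \mathrm{Tr}(X^T\N) \leq \E(\M) + \E(\N),
\]
and taking the maximum over $X$ on the left gives $\E(\M+\N) \leq \E(\M) + \E(\N)$.

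For the second inequality, I would exploit the first together with the sign-invariance $\E(-\mathbf{X}) = \E(\mathbf{X})$, which holds because $\sigma_i(-\mathbf{X}) = \sigma_i(\mathbf{X})$ for all $i$. Writing $\M = (\M-\N) + \N$ and applying subadditivity gives $\E(\M) - \E(\N) \leq \E(\M-\N)$; swapping the roles of $\M$ and $\N$ and using sign-invariance gives $\E(\N) - \E(\M) \leq \E(\N-\M) = \E(\M-\N)$. Combining the two produces $|\E(\M) - \E(\N)| \leq \E(\M-\N)$, as required.

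The main obstacle is establishing the dual representation, which carries the real content of the lemma. I would handle it in two steps. Using a singular value decomposition $\M = U\Sigma V^T$, the choice $X = UV^T$ has operator norm $1$ and satisfies $\mathrm{Tr}(X^T\M) = \mathrm{Tr}(\Sigma) = \E(\M)$, which proves the $\geq$ direction. For the reverse inequality I would invoke von Neumann's trace inequality $|\mathrm{Tr}(X^T\M)| \leq \sum_i \sigma_i(X)\sigma_i(\M)$, which combined with $\sigma_i(X) \leq \sigma_1(X) \leq 1$ yields $\mathrm{Tr}(X^T\M) \leq \E(\M)$. An alternative route that sidesteps the dual formulation is Ky Fan's dominance theorem applied to the sum of the top $n$ singular values, but it ultimately rests on the same majorization argument, so the essential difficulty is unchanged.
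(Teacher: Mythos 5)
Your proposal is correct. Note, however, that the paper offers no proof of this statement at all: it is imported verbatim as Lemma 2.21 of \cite{energy-lucelia}, so there is no in-paper argument to compare against. Your derivation --- identifying $\E$ with the trace (nuclear) norm, establishing its triangle inequality via the dual characterization $\E(\M)=\max\{\mathrm{Tr}(X^T\M):\sigma_1(X)\leq 1\}$ (the $\geq$ direction from the SVD choice $X=UV^T$, the $\leq$ direction from von Neumann's trace inequality), and then deducing the second inequality from subadditivity plus $\E(-\mathbf{X})=\E(\mathbf{X})$ --- is a standard and complete way to prove the cited fact, so it stands as a valid self-contained substitute for the external reference.
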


\begin{Pro}
	Let $\h$ be a non complete $k$-graph. If $e \notin E(\h)$, then
	$$|\QE(\h+e) - \QE(\h)| \leq 2k-\frac{2k}{n}.$$
\end{Pro}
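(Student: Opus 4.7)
The plan is to express the difference $\QE(\h+e)-\QE(\h)$ as the energy of a matrix associated with the single edge $e$, and then compute that energy explicitly.

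First I would decompose the signless Laplacian additively: writing $\h[e] = (V,\{e\})$ for the hypergraph with the sole edge $e$, we have $\Q(\h+e) = \Q(\h) + \Q(\h[e])$, and the average degree shifts by $d(\h+e) - d(\h) = \frac{k}{n}$. Therefore
\[
\bigl(\Q(\h+e)-d(\h+e)\mathbf{I}\bigr) - \bigl(\Q(\h)-d(\h)\mathbf{I}\bigr) \;=\; \Q(\h[e]) - \tfrac{k}{n}\mathbf{I}.
\]
Applying the second inequality of Lemma \ref{lem:energy-subgrafo} with $\M = \Q(\h+e)-d(\h+e)\mathbf{I}$ and $\mathbf{N} = \Q(\h)-d(\h)\mathbf{I}$ gives
\[
|\QE(\h+e)-\QE(\h)| \;\leq\; \E\!\left(\Q(\h[e]) - \tfrac{k}{n}\mathbf{I}\right).
\]

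Next I would compute the right-hand side. Since the incidence matrix of $\h[e]$ is a single $0/1$ column with exactly $k$ ones, the matrix $\Q(\h[e]) = \B(\h[e])\B(\h[e])^T$ is, up to a permutation, a $k\times k$ all-ones block padded by zeros. Its eigenvalues are therefore $k$ (once) and $0$ (with multiplicity $n-1$). Subtracting $\frac{k}{n}\mathbf{I}$ shifts these to $k - \frac{k}{n}$ (once) and $-\frac{k}{n}$ (with multiplicity $n-1$), so
\[
\E\!\left(\Q(\h[e]) - \tfrac{k}{n}\mathbf{I}\right) \;=\; \left(k-\tfrac{k}{n}\right) + (n-1)\tfrac{k}{n} \;=\; 2k - \tfrac{2k}{n},
\]
yielding the claimed bound.

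There is really no obstacle here: the only non-routine ingredient is recognising that the average-degree shift exactly corresponds to the $\frac{k}{n}$ correction needed to match the additive decomposition of $\Q$, so that the triangle inequality for energy can be applied cleanly. After that, the single-edge spectrum is a direct computation.
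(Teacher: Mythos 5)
Your proposal is correct and follows essentially the same route as the paper: both apply the second inequality of Lemma \ref{lem:energy-subgrafo} to reduce the problem to the energy of $\Q(\h+e)-\Q(\h)-\frac{k}{n}\mathbf{I}$, identify this matrix as an all-ones $k\times k$ block (shifted by $-\frac{k}{n}\mathbf{I}$), and read off the eigenvalues $k-\frac{k}{n}$ and $-\frac{k}{n}$ with multiplicity $n-1$. The only cosmetic difference is that you name the difference matrix as $\Q(\h[e])$ while the paper writes it out entrywise.
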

\begin{proof} First, we observe that
	\begin{eqnarray}
	|\QE(\h+e) - \QE(\h)| &=& \left| \E\left( \Q(\h+e) - \frac{k(m+1)}{n}\mathbf{I}\right)  - \E\left( \Q(\h) - \frac{km}{n}\mathbf{I}\right) \right| \notag \\
	&\leq& \E\left(\Q(\h+e) -  \Q(\h) -\frac{k}{n}\mathbf{I}\right). \notag
	\end{eqnarray}
	\noindent The inequality above follows from Lemma \ref{lem:energy-subgrafo}. Now, we observe that
	
	\[\M :=
	\Q(\h+e) -  \Q(\h) -\frac{k}{n}\mathbf{I} =
	\left[
	\begin{array}{cccc|cccc}
	1-\frac{k}{n} & 1 &\cdots&1&0&\cdots&0\\
	1 & 1-\frac{k}{n} &\cdots&1&0&\cdots&0\\
	\vdots&\vdots&\ddots&\vdots& \vdots&\vdots&\vdots\\
	1 & 1 &\cdots& 1-\frac{k}{n}&0&\cdots& 0\\
	\hline
	0&0&\cdots&0& -\frac{k}{n}&\cdots&0\\
	\vdots&\vdots&\vdots&\vdots& \vdots&\ddots&\vdots\\
	0&0&\cdots&0& 0&\cdots&-\frac{k}{n}\\
	\end{array}
	\right].
	\]
	That is, the eigenvalues of $\M$, are $k-\frac{k}{n}$ with multiplicity $1$ and $-\frac{k}{n}$ with multiplicity $n-1$, thus the energy of this matrix is
	$$\E \left(\Q(\h+e) -  \Q(\h) -\frac{k}{n}\mathbf{I}\right) =  k-\frac{k}{n} + (n-1)\frac{k}{n} = 2k-\frac{2k}{n}. $$
	Therefore, the result follows.		
\end{proof}

\begin{Teo} Let $\h$ be a $k$-graph with $n$ vertices and $m\geq1$ edges.
	\begin{itemize}
		\item[(a)] If $m=n$, then $\QE(\h) = \E(\A_\lin)$.
		\item[(b)] If $m < n$, then $\QE(\h)-\frac{2km(n-m)}{n}\leq \E(\A_\lin)< \QE(\h)$. Equality holds if and only if $\h$ has only isolated edges.
		\item[(c)] If $m > n$, then $\QE(\h) < \E(\A_\lin) < \QE(\h) + 2k(m-n)$.
	\end{itemize}
\end{Teo}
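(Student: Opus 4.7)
The plan is to apply Lemma~\ref{teo:multigrafo}, which gives $\Q = \B\B^T$ and $\B^T\B = k\mathbf{I} + \A_\lin$. Since $\B\B^T$ and $\B^T\B$ share their nonzero spectrum with multiplicities, their eigenvalue multisets differ only by $|n-m|$ extra zeros on the larger side. Writing $d = d(\h) = km/n$, both energies can be expressed in terms of the common spectrum, and the whole proof reduces to comparing $|\mu - k|$ with $|\mu - d|$ term by term via the triangle inequality, combined with the trace identity $\sum_i \mu_i = \mathrm{Tr}(\B^T\B) = km$ (which uses $\mathrm{Tr}(\A_\lin) = 0$). Case (a) is then immediate: when $m = n$ we have $d = k$ and the two matrices have identical spectra, so $\QE(\h) = \E(\A_\lin)$.

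For case (b), let $\mu_1,\ldots,\mu_m$ be the eigenvalues of $\B^T\B$; the eigenvalues of $\Q$ are then these together with $n-m$ extra zeros, so
\[
\E(\A_\lin) = \sum_{i=1}^{m} |\mu_i - k|, \qquad \QE(\h) = \sum_{i=1}^{m} |\mu_i - d| + (n-m)d.
\]
The key algebraic identity is $m(k-d) = (n-m)d = km(n-m)/n$. Summing the inequality $|\mu_i - k| \leq |\mu_i - d| + (k-d)$ over $i$ yields $\E(\A_\lin) \leq \QE(\h)$; summing the reverse $|\mu_i - d| \leq |\mu_i - k| + (k-d)$ yields $\QE(\h) - \frac{2km(n-m)}{n} \leq \E(\A_\lin)$. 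Strictness of the upper bound is forced because term-by-term equality would require $\mu_i \leq d$ for every $i$, contradicting $\sum \mu_i = km > md$ (using $m < n$). Equality in the lower bound requires $\mu_i \geq k$ for every $i$, i.e.\ every eigenvalue of $\A_\lin$ is nonnegative; combined with $\mathrm{Tr}(\A_\lin) = 0$ this forces $\A_\lin = \mathbf{0}$, which is exactly the statement that $\h$ consists of isolated edges.

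Case (c) is the mirror image. Letting $\lambda_1,\ldots,\lambda_n$ be the eigenvalues of $\Q$, the eigenvalues of $\B^T\B$ are these together with $m-n$ extra zeros, $d > k$, and $n(d-k) = k(m-n)$. The same triangle-inequality bookkeeping yields the two claimed inequalities. Strictness of the lower bound follows because equality would force all $\lambda_i \leq k$, contradicting $\sum \lambda_i = km > nk$; strictness of the upper bound follows because equality would force all $\lambda_i \geq d$, making $\Q = d\mathbf{I}$ and hence $\A_\cli = \mathbf{0}$, which is impossible whenever $\h$ has at least one edge. The main obstacle is just bookkeeping: keeping the signs and the extra zero contributions straight across the three regimes. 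Once $nd = km$ is invoked consistently, everything collapses to the triangle inequality together with this single trace constraint.
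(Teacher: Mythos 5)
Your proof is correct and follows essentially the same route as the paper: identify the spectra of $\B\B^T$ and $\B^T\B$ up to $|n-m|$ extra zeros, compare $|\mu-k|$ with $|\mu-d|$ term by term via the triangle inequality using $nd=km$, and settle the equality cases through trace constraints. If anything, your equality analysis for the upper bound in case (c) --- forcing $\Q=d\mathbf{I}$ from $\sum\lambda_i=nd$ and then contradicting $\A_\cli=\mathbf{0}$ --- is tighter than the paper's corresponding step, which argues somewhat loosely that all eigenvalues of $\A_\lin$ would have to be positive.
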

\begin{proof}
	To prove item $(a)$, we notice that
	$$\E(\A_\lin) = \E(\B^T\B - k\mathbf{I}) = \sum_{i=1}^m|\lambda_i(\B^T\B)-k|.$$
	Moreover,
	$$\QE(\h) = \E\left( \Q - \frac{km}{n}\mathbf{I}\right) = \E(\B\B^T - k\mathbf{I}) = \sum_{i=1}^n|\lambda_i(\B\B^T)-k|. $$
If $m=n$, then $\B\B^T$ and $\B^T\B$ have the same eigenvalues, so the equality is true.
	
For the first inequality of item $(b)$, we observe that if $i=1,\ldots,m$, then $\lambda_i(\B^T\B) = \lambda_i(\B\B^T)$ and if $m < i \leq n$, then $\lambda_i(\B\B^T) = 0$. We have	
	\begin{eqnarray}
	\notag \QE(\h) &=& \sum_{i=1}^n\left| \lambda_i - \frac{km}{n}\right|  = \sum_{i=1}^m\left| \lambda_i - \frac{km}{n}\right|  + \sum_{i=m+1}^n\left| \frac{km}{n}\right|\\
	\label{eq:soma-igualdade} &\leq& \sum_{i=1}^m\left|\lambda_i-k \right| + \sum_{i=1}^m\left|k - \frac{km}{n}\right| + \frac{km(n-m)}{n}\\
	\notag &=& \E(\A_\lin) + \frac{2km(n-m)}{n}.
	\end{eqnarray}
	\noindent The equality holds in (\ref{eq:soma-igualdade}) only if $\lambda_i-k$ and $k - \frac{km}{n}$ have the same sign. That is, $\lambda_i(\B^T\B) \geq k$, for each $i=1,\ldots,m$, thus
	$$\lambda_i(k\mathbf{I}+\A_\lin) = \lambda_i(\B^T\B) \geq k, \quad \Rightarrow \quad \lambda_i(\A_\lin) \geq 0.$$
	As $\mathrm{Tr}(\A_\lin) =0$, then all eigenvalues of $\A_\lin$ must be zeros. Therefore, $\A_\lin = \mathbf{0}$, so $\lin(\h)$ should have no edges, or equivalent, $\h$ should have only isolated edges.
	
Now, for the second inequality of item $(b)$, we observe that
	\begin{eqnarray}
	\notag \E(\A_\lin) &=& \sum_{i=1}^m|\lambda_i(\A_\lin)| = \sum_{i=1}^m|\lambda_i(\B^T\B) - k| = \sum_{i=1}^n|\lambda_i(\B\B^T) - k| - k(n-m)\\
	\notag &\leq& \sum_{i=1}^n\left| \lambda_i(\B\B^T) - \frac{km}{n}\right|  + \sum_{i=1}^n\left| k - \frac{km}{n}\right|  - k(n-m) = \QE(\h).
	\end{eqnarray}
	\noindent Similarly to the first part of this item, the equality could only occur if $\lambda_i - \frac{km}{n}$ is equal to or less than zero for all $i=1\ldots,n$, thus
	$$\lambda_i(\B^T\B)\leq \frac{km}{n} < k, \quad \Rightarrow \quad \lambda_i(\A_\lin) <0.$$
	As $\mathrm{Tr}(\A_\lin) =0$, this matrix cannot have all negative eigenvalues, so equality cannot be achieved.
	
	To prove the first inequality of item $ (c) $, notice that
	\begin{eqnarray}
	\notag\QE(\h) &=& \sum_{i=1}^n\left| \lambda_i(\B\B^T)-\frac{km}{n}\right|  = \sum_{i=1}^m\left| \lambda_i(\B^T\B)-\frac{km}{n}\right| -\frac{km(m-n)}{n}\\
	\notag &\leq& \sum_{i=1}^m\left| \lambda_i(\B^T\B)-k\right| + \sum_{i=1}^m\left| k -\frac{km}{n}\right|  -\frac{km(m-n)}{n}= \E(\A_\lin).
	\end{eqnarray}
	As well item $ (b) $, the equality could only be achieved if, $\lambda_i(\B^T\B)-k \leq 0$, for all $i=1,\ldots,m$, thus
	$$\lambda_i(\B^T\B)\leq k, \quad \Rightarrow \quad \lambda_i(\A_\lin) \leq 0.$$
	As $\mathrm{Tr}(\A_\lin) =0$, then all eigenvalues of $\A_\lin$ must be zeros, so $\A_\lin = \mathbf{0}$, i.e. $\h$ should have only isolated edges. But this contradicts the fact that the number of edges is greater than the number of vertices. Therefore this equality cannot be achieved.
	
	Finally, to prove the last inequality of item $ (c) $, we observe that	
	\begin{eqnarray}
	\notag \E(\A_\lin) &=& \sum_{i=1}^m\left|\lambda_i(\B^T\B)-k\right| = \sum_{i=1}^n\left|\lambda_i(\B\B^T)-k\right|+ k(m-n)\\
	\notag &\leq& \sum_{i=1}^n\left|\lambda_i(\B\B^T)-\frac{km}{n}\right| + \sum_{i=1}^n\left|k-\frac{km}{n}\right| + k(m-n)\\
	\notag &=& \QE(\h)+2k(m-n).
	\end{eqnarray}	
	As in the second part of item $ (b) $, equality could only be achieved, if $\lambda_i(\B\B^T)-\frac{km}{n} \geq 0$, for all $i=1,\ldots,n$, thus	$$\lambda_i(\B^T\B)\geq \frac{km}{n} > k, \quad \Rightarrow \quad \lambda_i(\A_\lin) > 0.$$
	As $\mathrm{Tr}(\A_\lin) =0$, this matrix cannot have all eigenvalues positive, so the equality cannot be achieved.
\end{proof}

\section{Power hypergraphs}\label{sec:power}
In this section, we compute the exact value of signless Laplacian energy from certain power hypergraphs. In addition, we obtain some properties of the adjacency energy of a line multigraph from a power hypergraph. For more details about this class, see \cite{Kaue-powers}.

\begin{Def}
	Let $\h=(V,E)$ be a $k$-graph, let $s \geq 1$ and $r \geq ks$ be
	integers. We define the (generalized) \textit{power hypergraph} $\h^r_s$ as
	the $r$-graph with the following sets of vertices and edges
	$$V(\h^r_s)=\left( \bigcup_{v\in V} \varsigma_v\right) \cup \left(
	\bigcup_{e\in E} \varsigma_e\right)\;\; \textrm{and}\;\;
	E(\h^r_s)=\{\varsigma_e\cup \varsigma_{v_1} \cup \cdots \cup \varsigma_{v_k}
	\colon e=\{v_1,\ldots, v_k\} \in E\},$$	where $\varsigma_{v}=\{v_{1}, \ldots,
	v_{s}\}$ for each vertex $v \in V(\h)$ and $\varsigma_e=\{v^1_e,
	\ldots,v^{r-ks}_e\}$ for each edge $e \in E(\h)$.
\end{Def}

Informally, we may say that $\h^r_s$ is obtained from a \textit{base hypergraph}
$\h$, by replacing each vertex $v\in V(\h)$ by a set $\varsigma_v$ of
cardinality $s$, and by adding a set $\varsigma_e$ with $r-ks$ new vertices
to each edge $e \in E(\h)$. For simplicity, we will denote $\h^r = \h^r_1$ and $\h_s = \h^{ks}_s$, so $\h^r_s = (\h_s)^r$.

\begin{Exe}
	The power hypergraph $(P_4)^5_2$ of the path $P_4$ is illustrated in
	Figure \ref{fig:ex1}.
	\begin{figure}[h]
		\centering
		\begin{tikzpicture}
		[scale=1,auto=left,every node/.style={circle,scale=0.9}]
		\node[draw,circle,fill=black,label=below:,label=above:\(v_1\)] (v1) at (0,0) {};
		\node[draw,circle,fill=black,label=below:,label=above:\(v_2\)] (v2) at (4,0) {};
		\node[draw,circle,fill=black,label=below:,label=above:\(v_3\)] (v3) at (8,0) {};
		\node[draw,circle,fill=black,label=below:,label=above:\(v_4\)] (v4) at (12,0) {};
		\path
		(v1) edge node[left]{} (v2)
		(v2) edge node[below]{} (v3)
		(v3) edge node[left]{} (v4);
		\end{tikzpicture}
		
		\begin{tikzpicture}
		\node[draw,circle,fill=black,label=below:,label=above:\(v_{11}\)] (v1) at (0,0) {};
		\node[draw,circle,fill=black,label=below:,label=above:\(v_{12}\)] (v11) at (1,0) {};
		\node[draw,circle,fill=black,label=below:,label=above:] (v22) at (2.25,0) {};
		
		\node[draw,circle,fill=black,label=below:,label=above:\(v_{21}\)] (v3) at (3.5,0) {};
		\node[draw,circle,fill=black,label=below:,label=above:\(v_{22}\)] (v31) at (4.5,0) {};
		\node[draw,circle,fill=black,label=below:,label=above:] (v42) at (5.75,0) {};
		
		\node[draw,circle,fill=black,label=below:,label=above:\(v_{31}\)] (v5) at (7.5,0) {};
		\node[draw,circle,fill=black,label=below:,label=above:\(v_{32}\)] (v51) at (8.5,0) {};
		\node[draw,circle,fill=black,label=below:,label=above:] (v62) at (9.75,0) {};
		
		\node[draw,circle,fill=black,label=below:,label=above:\(v_{41}\)] (v64) at (11,0) {};
		\node[draw,circle,fill=black,label=below:,label=above:\(v_{42}\)] (v7) at (12,0) {};

		\begin{pgfonlayer}{background}
		\draw[edge,color=gray] (v1) -- (v31);
		\draw[edge,color=gray,line width=25pt] (v3) -- (v51);
		\draw[edge,color=gray] (v5) -- (v7);
		\end{pgfonlayer}
		\end{tikzpicture}
		\caption{The power hypergraph $(P_4)^5_2$.}
		\label{fig:ex3}\label{fig:ex1}
	\end{figure}
\end{Exe}

\begin{Obs}
	Let $\h$ be a $k$-graph with $n$ vertices, $m$ edges having
	signless Laplacian eigenvalues $\lambda_1\geq\lambda_2\geq\cdots\geq\lambda_t
	>\lambda_{t+1}=\cdots=\lambda_n=0$. According to \cite{Kaue-lap}, we have the following.
	
	For $r > ks$ the eigenvalues of $\Q(\h^r_s)$ are $s\lambda_1+r-ks, \ldots,
	s\lambda_t+r-ks$, and $r-ks$ with multiplicity $m-t$, and $0$ with
	multiplicity $(r-ks-1)m+sn$.
\end{Obs}

\begin{Teo}
	Let $\h$ be a $k$-graph with $n$ vertices and $m$ edges. For integers $s \geq 1$ and $r > ks$, we have that
	\begin{enumerate}
		\item[(a)] If $r-ks > d(\h^r_s)$, then $\QE(\h_s^r) = 2rm\left(1 - \frac{m}{ns+(r-ks)m} \right) > 2ksm.$
		\item[(b)] If $r-ks = d(\h^r_s)$, then $\QE(\h_s^r) =  2ksm.$
		\item[(c)] If $r-ks < d(\h^r_s)$, then $\QE(\h_s^r) < 2ksm.$
		
	\end{enumerate}
\end{Teo}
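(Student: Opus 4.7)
The plan is to combine the explicit spectrum of $\Q(\h^r_s)$ given in the Remark with the identity $\QE(\h) = 2\sum_{i=1}^\omega \lambda_i - 2\omega\, d(\h)$ established earlier in the section. Let $N := ns+(r-ks)m$ (the number of vertices of $\h^r_s$), $d := d(\h^r_s) = rm/N$, and denote the eigenvalues of $\Q(\h^r_s)$ in decreasing order by $\mu_1 \geq \cdots \geq \mu_N$. According to the Remark, the nonzero $\mu_i$ consist of $s\lambda_i+(r-ks)$ for $i=1,\ldots,t$ together with $r-ks$ with multiplicity $m-t$; they sum to $skm + m(r-ks) = rm = Nd$, as required by $\mathrm{Tr}\,\Q(\h^r_s)=rm$. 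The argument then hinges on comparing $r-ks$ with $d$.

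For cases (a) and (b), where $r-ks \geq d$, every nonzero $\mu_i$ satisfies $\mu_i \geq r-ks \geq d$ while the zero eigenvalues are strictly below $d$, so $\omega = m$. The proposition then gives
\[
\QE(\h^r_s) = 2\sum_{i=1}^m \mu_i - 2md = 2(rm - md) = 2m(r-d) = 2rm\!\left(1-\tfrac{m}{N}\right).
\]
In case (a) the hypothesis $r-ks > d$ rearranges to $r-d > ks$, so the expression exceeds $2ksm$; in case (b), $r-ks = d$ forces $r-d = ks$ and the expression collapses to exactly $2ksm$.

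Case (c) is where I expect the main subtlety: with $r-ks < d$, the $\mu_i$ equal to $r-ks$ join the zero eigenvalues below $d$, and $\omega$ is no longer simply $m$ but depends on how many of the $s\lambda_i+(r-ks)$ still exceed $d$. To avoid tracking $\omega$ explicitly, I would use the equivalent form $\QE(\h^r_s) = 2\sum_{\mu_i \geq d}(\mu_i-d)$, introduce $W := \{i \leq t : s\lambda_i+(r-ks)\geq d\}$, and set $\alpha := d-(r-ks) > 0$. A direct computation gives
\[
\QE(\h^r_s) = 2s\sum_{i\in W}\lambda_i - 2|W|\alpha \leq 2s\sum_{i=1}^t \lambda_i = 2s\cdot km = 2ksm,
\]
using $\sum_i \lambda_i = \mathrm{Tr}\,\Q(\h) = km$. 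For strictness, observe that the largest eigenvalue $\mu_1 = s\lambda_1+(r-ks)$ is at least the average $d$, and $\lambda_1 > 0$ since $m \geq 1$; hence $1 \in W$, $|W| \geq 1$, and the term $-2|W|\alpha$ is strictly negative, giving $\QE(\h^r_s) < 2ksm$ and completing the trichotomy.
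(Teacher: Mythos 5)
Your proposal is correct and follows essentially the same route as the paper: both substitute the explicit spectrum of $\Q(\h^r_s)$ from the preceding Remark into the energy and split into the three cases according to the sign of $(r-ks)-d(\h^r_s)$, the only organizational difference being that you route the computation through the identity $\QE = 2\sum_{i\leq\omega}(\mu_i - d)$ while the paper expands $\sum_i|\mu_i-d|$ directly. In case (c) your strictness argument (the largest eigenvalue is at least the average, so $|W|\geq 1$ and the term $-2|W|\alpha$ is strictly negative) differs cosmetically from the paper's termwise strict triangle inequality, but both are sound.
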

\begin{proof}
	First of all, we notice that $d(\h^r_s) = \frac{rm}{|V(\h_s^r)|}$, where $|V(\h_s^r)| = ns+(r-ks)m$. Now, let $t$ be the number of positive eigenvalues of $\Q(\h)$, thus
	
	$$\QE(\h_s^r) = \sum_{i=1}^{|V(\h_s^r)|}\left|\lambda_i(\h_s^r) - \frac{rm}{|V(\h_s^r)|} \right| =$$
	$$\sum_{i=1}^t\left|s\lambda_i(\h) + (r-ks) - \frac{rm}{|V(\h_s^r)|} \right| + \sum_{i=t+1}^m\left|(r-ks) - \frac{rm}{|V(\h_s^r)|} \right| +  \sum_{i=m+1}^{|V(\h_s^r)|}\frac{rm}{|V(\h_s^r)|}.$$
	
	For item $(a)$, let $r-ks > d(\h^r_s)$, so
	\begin{eqnarray}\notag \QE(\h_s^r) &=& \sum_{i=1}^ts\lambda_i(\h) + \sum_{i=1}^m\left((r-ks) - \frac{rm}{|V(\h_s^r)|} \right) +  \sum_{i=m+1}^{|V(\h_s^r)|}\frac{rm}{|V(\h_s^r)|}\\
	\notag &=& ksm + m(r-ks) - \frac{rm^2}{|V(\h_s^r)|} + (|V(\h_s^r)|-m)\frac{rm}{|V(\h_s^r)|}\\
	\notag &=& 2rm\left(1 - \frac{m}{|V(\h_s^r)|} \right) >  2rm\left(1 - \frac{r-ks}{r} \right) = 2ksm.
	\end{eqnarray}
	
	Now, for item $(b)$, let $r-ks = d(\h^r_s)$, so
	\begin{eqnarray}\notag \QE(\h_s^r) &=& \sum_{i=1}^ts\lambda_i(\h)  +  \sum_{i=m+1}^{|V(\h_s^r)|}\frac{rm}{|V(\h_s^r)|} = ksm + (|V(\h_s^r)|-m)\frac{rm}{|V(\h_s^r)|}\\
	\notag &=&  ksm + rm\left(1 - \frac{m}{|V(\h_s^r)|} \right) = ksm + rm\left(1 - \frac{r-ks}{r} \right) = 2ksm.
	\end{eqnarray}
	
	Finally, for item $(c)$, let $r-ks < d(\h^r_s)$, so
	\begin{eqnarray}\notag \QE(\h_s^r) &<& \sum_{i=1}^ts\lambda_i(\h) + \sum_{i=1}^m\left(\frac{rm}{|V(\h_s^r)|} -(r-ks)\right) +  \sum_{i=m+1}^{|V(\h_s^r)|}\frac{rm}{|V(\h_s^r)|}\\
	\notag &=& ksm + \frac{rm^2}{|V(\h_s^r)|} - m(r-ks) + (|V(\h_s^r)|-m)\frac{rm}{|V(\h_s^r)|} = 2ksm.
	\end{eqnarray}
	Therefore, the result follows.
\end{proof}

\begin{Lem}\label{lem:linhk}
	Let $\h$ be a $k$-graph. For integers $r \geq k$ and $s \geq 1$, we have $$\A(\lin(\h^r)) = \A(\lin(\h))\quad \textrm{and}\quad \A(\lin(\h_s)) = s\A(\lin(\h)).$$
\end{Lem}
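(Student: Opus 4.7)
The plan is to unravel the definitions of the power construction and of the line multigraph, then perform a direct verification of each of the two equalities. Recall that the vertex set of $\lin(\h)$ is $E(\h)$, and for two distinct edges $e, f \in E(\h)$ the number of parallel edges between them in $\lin(\h)$ is $|e \cap f|$. So in both cases I only have to compute, for each pair of hyperedges in the power hypergraph, the size of their intersection.

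First I would handle $\h^r = \h^r_1$. Here each edge $e \in E(\h)$ is replaced by $e \cup \varsigma_e$, where $\varsigma_e = \{v^1_e, \dots, v^{r-k}_e\}$ is a set of $r-k$ new vertices that are \emph{private} to $e$ (they were introduced only when processing $e$). So for any two distinct edges $e, f \in E(\h)$, one has $\varsigma_e \cap \varsigma_f = \emptyset$ and $\varsigma_e \cap f = \emptyset = e \cap \varsigma_f$, which immediately gives
\[
(e \cup \varsigma_e) \cap (f \cup \varsigma_f) = e \cap f.
\]
Identifying the vertices of $\lin(\h^r)$ and $\lin(\h)$ in the obvious way, the adjacency matrices match entry by entry, proving $\A(\lin(\h^r)) = \A(\lin(\h))$.

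Next I would handle $\h_s = \h^{ks}_s$. Here no $\varsigma_e$ vertices are added (since $r - ks = 0$), and each vertex $v \in V(\h)$ is blown up into a set $\varsigma_v$ of $s$ new vertices. An edge $e = \{v_1, \dots, v_k\} \in E(\h)$ becomes $\tilde e := \varsigma_{v_1} \cup \cdots \cup \varsigma_{v_k}$, and the sets $\varsigma_v$ for distinct $v$ are pairwise disjoint. Consequently, for distinct $e, f \in E(\h)$,
\[
\tilde e \cap \tilde f = \bigcup_{v \in e \cap f} \varsigma_v, \qquad |\tilde e \cap \tilde f| = s\,|e \cap f|.
\]
This is precisely the statement $\A(\lin(\h_s)) = s\,\A(\lin(\h))$ after identifying the vertex sets of the two line multigraphs.

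There is no real obstacle here beyond keeping the bookkeeping of the two different kinds of new vertices straight; the key observation in each case is a simple disjointness fact about the auxiliary sets $\varsigma_e$ and $\varsigma_v$ coming from the construction of $\h^r_s$.
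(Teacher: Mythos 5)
Your proof is correct and follows essentially the same route as the paper's: both arguments rest on the observation that the auxiliary vertices $\varsigma_e$ are private to each hyperedge (so intersections are unchanged in $\h^r$) and that the blow-up sets $\varsigma_v$ are pairwise disjoint of size $s$ (so intersections scale by $s$ in $\h_s$). Your version just makes the bookkeeping of $|\tilde e \cap \tilde f|$ more explicit than the paper does.
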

\begin{proof}
In the first equality, we observe that the number of hyperedges, and the connections between it, do not change by adding new vertices. So, the line multigraph of $\h$ is the same as the line multigraph of $\h^r$ and consequently $\A(\lin(\h)) = \A(\lin(\h^r))$.

For the second equality, we notice that change each vertex for a set of cardinality $s$, do not change the number of hyperedges. Further, we observe that, if two hyperedges are disjoint in $ \h $, they must remain disjoint in $ \h_s $, but if two hyperedges had $t$ common vertices in the base $k$-graph, then they will have $ st $ common vertices in the power hypergraph. Therefore, if two vertices have $ t $ common edges in $ \lin(\h) $, these vertices should have $st$ common edges in $ \lin (\h_s) $. That is, $\A(\lin(\h_s)) = s\A(\lin(\h)).$
\end{proof}

\begin{Pro}\label{teo:espec-power-lin}
	Let $\h$ be a $k$-graph. For integers $s \geq 1$ and $r \geq ks$, we have
	$$P_{\lin(\h^r_s)}(\lambda) = s^mP_{\lin(\h)}(\lambda/s).$$
That is, $ \lambda $ is an eigenvalue of $\A(\lin(\h))$ if and only if $s\lambda$ is an eigenvalue of $\A(\lin(\h^r_s))$.
\end{Pro}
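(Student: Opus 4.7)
The plan is to reduce the statement immediately to Lemma \ref{lem:linhk} by using the factorization $\h^r_s = (\h_s)^r$ noted in the paper. The result will then follow from a one-line identity about how the characteristic polynomial scales when a matrix is multiplied by a scalar.

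First, I would observe that applying the first identity of Lemma \ref{lem:linhk} to the base hypergraph $\h_s$ (which is a $ks$-graph on $ns + (ks-ks)m = ns$ vertices with $m$ edges) and the power index $r \geq ks$ gives
\[
\A(\lin(\h^r_s)) \;=\; \A(\lin((\h_s)^r)) \;=\; \A(\lin(\h_s)).
\]
Then applying the second identity of the same lemma to $\h$ with parameter $s$ yields $\A(\lin(\h_s)) = s\,\A(\lin(\h))$. Combining these,
\[
\A(\lin(\h^r_s)) \;=\; s\,\A(\lin(\h)).
\]
Since $\h$ has $m$ edges, the matrix $\A(\lin(\h))$ has order $m$, and hence so does $\A(\lin(\h^r_s))$.

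Next I would compute the characteristic polynomial directly. For any $m \times m$ matrix $\M$ and scalar $s$, one has $\det(\lambda\mathbf{I}_m - s\M) = s^m \det((\lambda/s)\mathbf{I}_m - \M)$ (formally valid for $s \neq 0$, and for $s=0$ the statement reduces to the trivial observation $\lin(\h^r_s)$ has $m$ vertices and no edges when $s=0$, which does not occur since $s \geq 1$). Applied to $\M = \A(\lin(\h))$ this gives
\[
P_{\lin(\h^r_s)}(\lambda) \;=\; \det\bigl(\lambda\mathbf{I}_m - s\,\A(\lin(\h))\bigr) \;=\; s^m\,P_{\lin(\h)}(\lambda/s),
\]
which is precisely the claim. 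The eigenvalue statement then follows at once: $\lambda$ is a root of $P_{\lin(\h)}$ iff $s\lambda$ is a root of $P_{\lin(\h^r_s)}$.

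There is essentially no obstacle here; the only subtlety is making sure that the identity $\h^r_s = (\h_s)^r$ is applied with the correct parameters so that Lemma \ref{lem:linhk} is invoked legitimately (in particular that $r \geq ks$ is exactly the condition needed to form the $r$-power of the $ks$-graph $\h_s$). Once that is checked, the proof is a two-line chain of equalities followed by the scalar-pull-out for the characteristic polynomial.
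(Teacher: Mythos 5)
Your proof is correct and follows essentially the same route as the paper's: both use $\A(\lin(\h^r_s)) = \A(\lin(\h_s)) = s\,\A(\lin(\h))$ from Lemma \ref{lem:linhk} and then pull the scalar $s$ out of the determinant. Your explicit check that the decomposition $\h^r_s = (\h_s)^r$ invokes the lemma with the right parameters is a welcome bit of extra care that the paper leaves implicit.
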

\begin{proof}
	We notice that
	\begin{eqnarray}
	\notag P_{\lin(\h^r_s)}(\lambda) &=& \det\left(\lambda\mathbf{I} - \A(\lin(\h^r_s)) \right) = \det\left(\lambda\mathbf{I} - \A(\lin(\h_s)) \right)\\
	\notag &=& \det\left(\lambda\mathbf{I} - s\A(\lin(\h)) \right)= s^m\det\left((\lambda/s)\mathbf{I} - \A(\lin(\h)) \right) = s^mP_{\lin(\h)}(\lambda/s).
	\end{eqnarray}
	Therefore, we conclude the result.
\end{proof}

\begin{Teo}
	Let $\h$ be a $k$-graph. If $s \geq 1$ and $r \geq ks$ are integers, then
	$$\E(\lin(\h^r_s)) = s\E(\lin(\h)).$$
\end{Teo}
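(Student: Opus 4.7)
The plan is to reduce this statement directly to Proposition \ref{teo:espec-power-lin} (or equivalently to Lemma \ref{lem:linhk}), which has already done the substantive work. Let $m$ denote the number of edges of $\h$. Both $\lin(\h)$ and $\lin(\h^r_s)$ are multigraphs on exactly $m$ vertices (one vertex per edge of the underlying hypergraph), so both adjacency matrices are of order $m \times m$, and their spectra consist of $m$ real numbers counted with multiplicity.

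The first step is to identify the spectrum of $\A(\lin(\h^r_s))$. Let $\lambda_1, \ldots, \lambda_m$ denote the eigenvalues of $\A(\lin(\h))$. By Proposition \ref{teo:espec-power-lin}, the characteristic polynomials satisfy $P_{\lin(\h^r_s)}(\lambda) = s^m P_{\lin(\h)}(\lambda/s)$, so the eigenvalues of $\A(\lin(\h^r_s))$, with multiplicities, are precisely $s\lambda_1, \ldots, s\lambda_m$. Equivalently, one can invoke Lemma \ref{lem:linhk} twice to write $\A(\lin(\h^r_s)) = \A(\lin((\h_s)^r)) = \A(\lin(\h_s)) = s\,\A(\lin(\h))$, from which the same spectral statement follows.

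The second step is to sum absolute values. Since $s \geq 1$ is a positive integer, $|s\lambda_i| = s|\lambda_i|$ for every $i$, and therefore
\[
\E(\lin(\h^r_s)) \;=\; \sum_{i=1}^{m} |s\lambda_i| \;=\; s \sum_{i=1}^{m} |\lambda_i| \;=\; s\,\E(\lin(\h)),
\]
which is the desired identity. There is essentially no obstacle here: the hard work was already packaged into Lemma \ref{lem:linhk} (recognizing that the line multigraph is unchanged by the $\h \mapsto \h^r$ operation and simply scales by $s$ under $\h \mapsto \h_s$), so the proof reduces to the elementary fact that energy is positively homogeneous of degree one.
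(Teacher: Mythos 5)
Your proof is correct and follows essentially the same route as the paper: both invoke Proposition \ref{teo:espec-power-lin} to identify the spectrum of $\A(\lin(\h^r_s))$ as $s\lambda_1,\ldots,s\lambda_m$ and then sum absolute values. The additional remark that one could instead use Lemma \ref{lem:linhk} directly is a harmless shortcut, since that lemma is what underlies the proposition anyway.
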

\begin{proof}
	According to Proposition \ref{teo:espec-power-lin}, we have $$\E(\lin(\h^r_s)) =\sum_{i=1}^m|\lambda_i(\lin(\h^r_s))| = \sum_{i=1}^m|s\lambda_i(\lin(\h))| = s\E(\lin(\h)).$$
	Therefore, the result follows.
\end{proof}

\begin{Lem}[Lemma 4, \cite{Kaue-lap}]\label{lem:graulinha}
	Let $\h$ be a $k$-graph and $\lin(\h)$ its line graph. If $u \in
	V(\lin(\h))$ is a vertex obtained from the edge $e\in E(\h)$, then
	$d_\lin(u) = \sum_{v \in e}\left(d_\h(v)-1\right).$
\end{Lem}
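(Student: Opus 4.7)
The plan is to unwind the definition of the line multigraph and count incidences by swapping the order of summation. Let $u \in V(\lin(\h))$ correspond to the hyperedge $e \in E(\h)$, and for each other hyperedge $e' \in E(\h)\smallsetminus\{e\}$ let $u'$ denote the corresponding vertex of $\lin(\h)$. By the definition of $\lin(\h)$ recalled in Section~\ref{sec:pre}, the number of parallel edges joining $u$ and $u'$ in the line multigraph is exactly $|e \cap e'|$, so
\[
d_\lin(u) \;=\; \sum_{e' \in E(\h)\smallsetminus\{e\}} |e \cap e'|.
\]

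Next I would rewrite each intersection size using indicator functions, $|e \cap e'| = \sum_{v \in e} \mathbf{1}_{v \in e'}$, and exchange the two summations. This gives
\[
d_\lin(u) \;=\; \sum_{v \in e} \sum_{e' \in E(\h)\smallsetminus\{e\}} \mathbf{1}_{v \in e'} \;=\; \sum_{v \in e}\bigl|\{e' \in E(\h)\smallsetminus\{e\} : v \in e'\}\bigr|.
\]
For each $v \in e$, the inner cardinality counts the edges through $v$ other than $e$ itself; since $v \in e$, this is precisely $|E_{[v]}| - 1 = d_\h(v) - 1$. Substituting yields $d_\lin(u) = \sum_{v \in e}(d_\h(v)-1)$, as claimed.

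There is no real obstacle here; the argument is a one-line double counting once the conventions for the line multigraph are in hand. The only point that deserves a brief mention is that parallel edges must be counted with multiplicity (a vertex $v$ shared by $e$ and $e'$ contributes one edge to $d_\lin(u)$ for each such $v$), which is exactly what the identity $|e\cap e'| = \sum_{v\in e}\mathbf{1}_{v\in e'}$ encodes, and which justifies why the right-hand side does not need a further factor or correction.
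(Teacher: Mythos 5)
Your argument is correct: the paper itself does not prove this lemma (it is imported as Lemma 4 of \cite{Kaue-lap}), and your double-counting of the multiplicities $|e \cap e'|$ via the exchange $\sum_{e'\neq e}\sum_{v\in e}\mathbf{1}_{v\in e'}=\sum_{v\in e}(d_\h(v)-1)$ is exactly the standard proof of that cited result. Nothing is missing; your remark about counting parallel edges with multiplicity is the one point worth making, and you made it.
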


\begin{Lem}\label{lem:mlin}
	Let $\h$ be a $k$-graph with $n$ vertices and $m$ edges. If $m_\lin$ is the number of edges from the line multigraph $\lin(\h)$, then $m_\lin = \frac{1}{2}\left(Z(\h)-km\right)$.
\end{Lem}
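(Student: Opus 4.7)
The plan is to apply the handshake lemma to the multigraph $\lin(\h)$ and then use Lemma~\ref{lem:graulinha} to convert the degree sum into a statement about vertex degrees of the base hypergraph $\h$.

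Concretely, I would first write
\[
2m_\lin \;=\; \sum_{u \in V(\lin(\h))} d_\lin(u),
\]
which is valid for multigraphs (each edge contributes $2$ to the total degree). Since vertices of $\lin(\h)$ are in bijection with edges of $\h$, I reindex the sum over $e \in E(\h)$ and apply Lemma~\ref{lem:graulinha}, obtaining
\[
2m_\lin \;=\; \sum_{e \in E(\h)} \sum_{v \in e}\bigl(d_\h(v)-1\bigr)
\;=\; \sum_{e \in E(\h)} \sum_{v \in e} d_\h(v) \;-\; km,
\]
where the $-km$ comes from the fact that each edge contributes $k$ ones.

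The key step is then the double-counting identity
\[
\sum_{e \in E(\h)} \sum_{v \in e} d_\h(v) \;=\; \sum_{v \in V(\h)} d_\h(v)^2 \;=\; Z(\h),
\]
which follows because each vertex $v$ is counted once for every edge containing it, and there are exactly $d_\h(v)$ such edges, so the total contribution of $v$ is $d_\h(v)\cdot d_\h(v)$. Plugging this in yields $2m_\lin = Z(\h) - km$, and dividing by $2$ gives the claim.

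There is no real obstacle here; the proof is a two-line double count. The only thing to be slightly careful about is that $\lin(\h)$ is a \emph{multi}graph, so the handshake lemma must be understood with multiplicities (each parallel edge counted separately), which is consistent with how Lemma~\ref{lem:graulinha} computes $d_\lin(u)$.
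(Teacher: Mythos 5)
Your proof is correct and follows essentially the same route as the paper's: handshake lemma on the multigraph, Lemma~\ref{lem:graulinha} to express $d_\lin$, and a double count converting the sum over edge--vertex incidences into $\sum_{v} d_\h(v)^2 = Z(\h)$. The only cosmetic difference is that the paper groups the sum as $\sum_{u}d_\h(u)(d_\h(u)-1)$ in one step, whereas you separate the $-1$ contribution first; the argument is the same.
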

\begin{proof}
	By Lemma \ref{lem:graulinha}, we have
	\begin{eqnarray}
	\notag 2m_\lin &=& \sum_{v \in V(\lin(\h))}d_\lin(v) = \sum_{e \in E(\h)}\left(\sum_{u \in e}(d_\h(u)-1) \right) = \sum_{u \in V(\h)}d_\h(u)(d_\h(u)-1)\\
	\notag &=& \sum_{u \in V(\h)} d_\h(u)^2 - \sum_{u \in V(\h)} d_\h(u) = Z(\h)-km
	\end{eqnarray}
	Therefore, the result follows.
\end{proof}

\begin{Lem}[Theorem 5.2, \cite{energy-book}]\label{lem:bound-adj}
	If $\g$ is a graph with $m$ edges, then $2\sqrt{m} \leq \E(\g) \leq 2m$.
\end{Lem}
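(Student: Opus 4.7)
The plan is to derive both bounds from the two fundamental trace identities for the adjacency matrix $\A_\g$, namely $\mathrm{Tr}(\A_\g)=0$ (the graph has no loops) and $\mathrm{Tr}(\A_\g^2)=2m$ (counting closed walks of length two). Writing $\lambda_1,\ldots,\lambda_n$ for the eigenvalues of $\A_\g$, these become $\sum_{i=1}^n \lambda_i = 0$ and $\sum_{i=1}^n \lambda_i^2 = 2m$. Combining them via the Newton identity gives the useful byproduct
\[
\sum_{1\leq i<j\leq n}\lambda_i\lambda_j \;=\; \tfrac{1}{2}\!\left(\Bigl(\sum_i\lambda_i\Bigr)^{\!2} - \sum_i\lambda_i^2\right) \;=\; -m.
\]

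For the lower bound, I would square the energy and split into diagonal and off-diagonal parts:
\[
(\E(\g))^2 \;=\; \Bigl(\sum_{i=1}^n |\lambda_i|\Bigr)^{\!2} \;=\; \sum_{i=1}^n \lambda_i^2 + 2\!\!\sum_{1\leq i<j\leq n} |\lambda_i|\,|\lambda_j|.
\]
The first sum equals $2m$. For the second, the triangle inequality gives $\sum_{i<j}|\lambda_i|\,|\lambda_j| \geq \bigl|\sum_{i<j}\lambda_i\lambda_j\bigr| = m$. Therefore $(\E(\g))^2 \geq 2m+2m=4m$, and the lower bound $\E(\g)\geq 2\sqrt{m}$ follows.

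For the upper bound, I would apply Cauchy--Schwarz directly to the singular values:
\[
\E(\g) \;=\; \sum_{i=1}^n |\lambda_i| \;\leq\; \sqrt{n \sum_{i=1}^n \lambda_i^2} \;=\; \sqrt{2mn}.
\]
To turn $\sqrt{2mn}$ into $2m$, note that any isolated vertex contributes a zero eigenvalue and does not change $\E(\g)$ or $m$, so one may assume $\g$ has no isolated vertices. Then every vertex has degree at least $1$, so $n \leq \sum_{v}d(v) = 2m$, and hence $\sqrt{2mn}\leq \sqrt{4m^{2}}=2m$.

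The only step requiring a moment's thought is the triangle-inequality bound $\sum_{i<j}|\lambda_i||\lambda_j|\geq \bigl|\sum_{i<j}\lambda_i\lambda_j\bigr|=m$, which converts the algebraic identity coming from Newton's formula into an inequality on the positive quantities appearing in $\E(\g)^2$; the rest is a mechanical application of Cauchy--Schwarz together with the routine reduction to graphs without isolated vertices. I expect this latter reduction, rather than any real computation, to be the only place where one must be careful.
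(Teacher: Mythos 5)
Your proof is correct, and since the paper only cites this statement (Theorem 5.2 of \cite{energy-book}) without reproducing a proof, there is nothing internal to compare it against. Both halves of your argument are the classical ones: the lower bound via $(\E(\g))^2=\sum_i\lambda_i^2+2\sum_{i<j}|\lambda_i||\lambda_j|\geq 2m+2\bigl|\sum_{i<j}\lambda_i\lambda_j\bigr|=4m$, and the upper bound via McClelland's inequality $\E(\g)\leq\sqrt{2mn}$ followed by the reduction to graphs without isolated vertices, where $n\leq 2m$. The only remark worth making is that the upper bound also follows in one line from the subadditivity of energy (the paper's Lemma \ref{lem:energy-subgrafo}): writing $\A_\g=\sum_{e\in E}\A_e$ as a sum of single-edge adjacency matrices gives $\E(\g)\leq\sum_e\E(\A_e)=2m$, which avoids the isolated-vertex discussion entirely; but your route is equally valid and self-contained.
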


A hypergraph is \textit{linear} if each pair of edges has at most one common vertex.
\begin{Teo}\label{teo:indice-cota-power}
	If $\h$ is a linear $k$-graph with $n$ vertices and $m$ edges, then
	$$\sqrt{2s^2(Z(\h)-km)} \leq \E(\lin(\h^r_s)) \leq s(Z(\h)-km).$$
\end{Teo}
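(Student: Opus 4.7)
The proof should be a straightforward chaining of three earlier results, with the linearity hypothesis playing the crucial role of making $\lin(\h)$ a genuine simple graph (rather than a multigraph) so that the adjacency energy bound applies.

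The plan is as follows. First I would invoke the identity $\E(\lin(\h^r_s)) = s\E(\lin(\h))$ established just above (via the line-multigraph spectrum scaling from Proposition \ref{teo:espec-power-lin}), which reduces both inequalities to corresponding bounds on $\E(\lin(\h))$ with a uniform factor of $s$. Next, I would observe that since $\h$ is linear, any two edges of $\h$ share at most one vertex, so the line multigraph $\lin(\h)$ has no parallel edges and is therefore an ordinary simple graph. This means Lemma \ref{lem:bound-adj} applies with $m_\lin$ in place of $m$, giving
\[
2\sqrt{m_\lin} \;\le\; \E(\lin(\h)) \;\le\; 2m_\lin.
\]

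Then I would substitute the edge count of $\lin(\h)$ supplied by Lemma \ref{lem:mlin}, namely $m_\lin = \tfrac{1}{2}(Z(\h)-km)$. Multiplying through by $s$ yields
\[
s\sqrt{2(Z(\h)-km)} \;\le\; s\E(\lin(\h)) \;\le\; s(Z(\h)-km),
\]
and the left-hand side is exactly $\sqrt{2s^2(Z(\h)-km)}$, which matches the claimed lower bound.

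There is no real obstacle: the only point requiring any care is justifying that Lemma \ref{lem:bound-adj} (stated for graphs) applies to $\lin(\h)$, and this is precisely what linearity of $\h$ guarantees. One should also note the mild edge case $Z(\h) = km$ (i.e., $\h$ has only disjoint edges or is empty), in which case $\lin(\h)$ has no edges, both bounds collapse to $0$, and the inequalities hold trivially.
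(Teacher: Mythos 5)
Your proposal is correct and follows essentially the same route as the paper: use linearity to see that $\lin(\h)$ is a simple graph so that Lemma \ref{lem:bound-adj} applies, scale by $s$ via $\E(\lin(\h^r_s)) = s\E(\lin(\h))$, and substitute $m_\lin = \tfrac{1}{2}(Z(\h)-km)$ from Lemma \ref{lem:mlin}. The only difference is cosmetic — you make the scaling identity and the degenerate case $Z(\h)=km$ explicit, which the paper leaves implicit.
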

\begin{proof}
	First note, if $\h$ is linear, then $\lin(\h)$ is a graph, so by Lemma \ref{lem:bound-adj} we have
	$$2\sqrt{m_\lin} \leq \E(\lin(\h)) \leq 2m_\lin \quad \Rightarrow \quad 2s\sqrt{m_\lin} \leq \E(\lin(\h_s^r)) \leq 2sm_\lin.$$
	Now by Lemma \ref{lem:mlin}, we have
	$$2s\sqrt{\frac{1}{2}\left(Z(\h)-km\right)} \leq \E(\lin(\h_s^r)) \leq 2s\left( \frac{1}{2}\left(Z(\h)-km\right)\right).$$
	Thus we prove the result.
\end{proof}

\section*{Acknowledgments} This work is part of
doctoral studies of K. Cardoso under the supervision of V.~Trevisan. K.
Cardoso is grateful for the support given by Instituto Federal do Rio Grande
do Sul (IFRS), Campus Feliz. V. Trevisan acknowledges partial support of CNPq grants
409746/2016-9 and 303334/2016-9, CAPES (Proj. MATHAMSUD 18-MATH-01) and
FAPERGS (Proj.\ PqG 17/2551-0001).

\bibliographystyle{acm}
\bibliography{Bibliografia}
\end{document}